\newtheorem{theorem}{Theorem}[section]
\newtheorem{lemma}[theorem]{Lemma}
\newtheorem{corollary}[theorem]{Corollary} 
\newtheorem{conj}[theorem]{Conjecture}
\newtheorem{claim}[theorem]{Claim}
\theoremstyle{definition}
\newtheorem{defn}[theorem]{Definition}
\newtheorem*{qu*}{Question}
\theoremstyle{remark}
\newcommand\N{\mathbb{N}}
\newcommand\cF{\mathcal{F}}
\renewcommand\Pr{\operatorname{\mathbb{P}}}
\newcommand\eps{\varepsilon}
\renewcommand\leq{\leqslant}
\renewcommand\geq{\geqslant}
\renewcommand\le{\leqslant}
\renewcommand\ge{\geqslant}
\renewcommand\to{\rightarrow}
\renewcommand{\leq}{\leqslant}
\renewcommand{\geq}{\geqslant}
\renewcommand{\le}{\leqslant}
\renewcommand{\ge}{\geqslant}
\renewcommand{\to}{\rightarrow}
\let\eps\varepsilon
\def\Ex{\mathbb{E}}
\def\N{\mathbb{N}}
\def\Pr{\mathbb{P}}
\def\<{\langle}
\def\>{\rangle}
\def\0{\mathbf{0}}
\def\1{\mathbbm{1}}
\begin{document}

\title[Set-colouring Ramsey numbers]{A lower bound for Set-colouring Ramsey numbers}
\author{Lucas Arag\~ao \and Maur\'icio Collares \and Jo\~ao Pedro Marciano \and \\
Ta\'isa Martins \and Robert Morris}

\thanks{This research was supported by:~(M.C.) the Austrian Science Fund (FWF P36131); (T.M.) CNPq (Proc.~406248/2021-4); (R.M.) FAPERJ (Proc.~E-26/200.977/2021) and CNPq (Proc.~303681/2020-9).}

\address{IMPA, Estrada Dona Castorina 110, Jardim Bot\^anico, Rio de Janeiro, RJ, Brazil}
\email{\{joao.marciano,l.aragao,rob\}@impa.br}

\address{Institute of Discrete Mathematics, Graz University of Technology, Steyrergasse 30, 8010 Graz, Austria}
\email{mauricio@collares.org}

\address{Instituto de Matem\'atica, Universidade Federal Fluminense,
	Niter\'oi, Brazil}
\email{tlmartins@id.uff.br}

\begin{abstract}
The set-colouring Ramsey number $R_{r,s}(k)$ is defined to be the minimum $n$ such that if each edge of the complete graph $K_n$ is assigned a set of $s$ colours from $\{1,\ldots,r\}$, then one of the colours contains a monochromatic clique of size $k$. The case $s = 1$ is the usual $r$-colour Ramsey number, and the case $s = r - 1$ was studied by Erd\H{o}s, Hajnal and Rado in 1965, and by Erd\H{o}s and Szemerédi in 1972. 

The first significant results for general $s$ were obtained only recently, by Conlon, Fox, He, Mubayi, Suk and Verstra\"ete, who showed that $R_{r,s}(k) = 2^{\Theta(kr)}$ if $s/r$ is bounded away from $0$ and $1$. In the range $s = r - o(r)$, however, their upper and lower bounds diverge significantly. In this note we introduce a new (random) colouring, and use it to determine $R_{r,s}(k)$ up to polylogarithmic factors in the exponent for essentially all $r$, $s$ and $k$. 
\end{abstract}

\maketitle

\section{Introduction}\label{sec:introduction}

The $r$-colour Ramsey number $R_r(k)$ is defined to be the minimum $n \in \N$ such that every $r$-colouring $\chi \colon E(K_n) \to \{1,\ldots,r\}$ of the edges of the complete graph on $n$ vertices contains a monochromatic clique of size $k$. These numbers (and their extensions to general graphs, hypergraphs, etc.) are among the most important and extensively-studied objects in combinatorics, see for example the beautiful survey article~\cite{CFS}.  

In this paper we will study the following generalisation of the $r$-colour Ramsey numbers. 

\begin{defn}
The set-colouring Ramsey number $R_{r,s}(k)$ is the least $n \in \N$ such that every colouring $\chi \colon E(K_n) \to \binom{[r]}{s}$ contains a monochromatic clique of size $k$, that is, a set $S \subset V(K_n)$ with $|S| = k$ and a colour $i \in [r]$ such that $i \in \chi(e)$ for every $e \in \binom{S}{2}$.
\end{defn}

In other words, we assign a set $\chi(e) \subset [r] = \{1,\ldots,r\}$ of $s$ colours to each edge of the complete graph, and say that a clique is monochromatic if there exists a colour $i \in [r]$ that is assigned to every edge of the clique. 
Note that when $s = 1$ this is simply the usual $r$-colour Ramsey number $R_r(k)$, for which the best known bounds are 
$$2^{\Omega(kr)} \le R_r(k) \le r^{O(kr)}.$$
Both bounds have simple proofs: the upper bound follows from the classical method of Erd\H{o}s and Szekeres~\cite{ESz35}, while the lower bound can be proved using a simple product colouring due to Lefmann~\cite{L}. Determining whether or not $R_r(k) = 2^{\Theta(kr)}$ is a major and longstanding open problem, see for example the recent improvements of the lower bound in~\cite{CF,W,S}. 

The study of set-colouring Ramsey numbers was initiated in the 1960s by Erd\H{o}s, Hajnal and Rado~\cite{EHR}, who conjectured that $R_{r,r-1}(k) \le 2^{\delta(r) k}$ for some function $\delta(r) \to 0$ as $r \to \infty$. This conjecture was proved by Erd\H{o}s and Szemerédi~\cite{ESz} in 1972, who showed that in fact
$$2^{\Omega(k/r)} \le R_{r,r-1}(k) \le r^{O(k/r)}.$$
The lower bound follows from a simple random colouring, while to prove the upper bound Erd\H{o}s and Szemerédi showed that any $2$-colouring in which one of the colours has density at most $1/r$ contains a monochromatic clique of size $c(r) \log n$, where $c(r) = \Omega( r / \log r )$, and then applied this result to the colour that is assigned to most edges.

For more general values of $s$, the first significant progress was made only recently, by Conlon, Fox, He, Mubayi, Suk and Verstra\"ete~\cite{CFHMSV}, who showed that 
$$R_{r,s}(k) = 2^{\Theta(kr)}$$ 
for every function $s = s(r)$ such that $s/r$ is bounded away from $0$ and $1$. More precisely, they used a clever generalisation of the approach of Erd\H{o}s and Szemerédi~\cite{ESz} to prove that 
\begin{equation}\label{CFHMSV:upper}
R_{r,s}(k) \le \exp\bigg( \frac{ck(r-s)^2}{r} \log \frac{r}{\min\{s,r-s\}} \bigg)
\end{equation}
for some absolute constant $c > 0$, and defined a product colouring, which exploited an interesting and surprising connection with error-correcting codes, to show that
\begin{equation}\label{CFHMSV:lower}
R_{r,s}(k) \ge \exp\bigg( \frac{c'k(r-s)^3}{r^2} \bigg)
\end{equation}
for some constant $c' > 0$. They also noted 
that a simple random colouring gives a lower bound of the form $R_{r,s}(k) \ge 2^{\Omega(k(r-s)/r)}$, which is stronger when $r - s \ll \sqrt{r}$. 

While the exponents in~\eqref{CFHMSV:upper} and~\eqref{CFHMSV:lower} differ by only a factor of $\log r$ when $r - s = \Omega(r)$, they diverge much more significantly when $(r - s)/r \to 0$.\footnote{We remark that the range $s = r - o(r)$ was of particular interest to the authors of~\cite{CFHMSV}, who were motivated by an application to hypergraph Ramsey numbers, see~\cite{CFHMSVapp}.} The main result of this paper is the following improved lower bound, which will allow us to determine $R_{r,s}(k)$ up to a poly-logarithmic factor in the exponent for essentially all $r$, $s$ and $k$. 

\begin{theorem}\label{thm:setRamsey}
There exist constants $C > 0$ and $\delta > 0$ such that the following holds. If $r,s \in \N$ with $s \le r - C\log r$, then 
\begin{equation}\label{eq:setRamsey:lower}
R_{r,s}(k) \ge \exp\bigg( \frac{\delta k (r-s)^2}{r} \bigg)
\end{equation}
for every $k \ge (C / \eps) \log r$, where $\eps = (r-s)/r$.
\end{theorem}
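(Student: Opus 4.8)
The plan is to prove the lower bound via an explicit random colouring, generalising both the simple random colouring and the product colouring of Conlon, Fox, He, Mubayi, Suk and Verstraète. Write $\eps = (r-s)/r$, so $r - s = \eps r$, and we aim to show that the random colouring, on $n = \exp(\delta \eps^2 r k)$ vertices, contains no monochromatic clique of size $k$ with positive probability. The key idea I would pursue is to build the colour set $\chi(e) \in \binom{[r]}{s}$ by *choosing which $r - s$ colours to omit*: equivalently, we specify a random "forbidden set" $F(e) \in \binom{[r]}{r-s}$ for each edge, and a clique $S$ is monochromatic in colour $i$ precisely when $i \notin F(e)$ for all $e \in \binom{S}{2}$. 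Thus avoiding monochromatic cliques means: for every $k$-set $S$ and every colour $i \in [r]$, some edge inside $S$ has $i \in F(e)$. The natural random model is to pick $F(e)$ by partitioning $[r]$ into $\eps^{-1}$ blocks $B_1, \dots, B_{1/\eps}$ of size $\eps r$ each (assume divisibility; handle rounding at the end), and letting $F(e)$ be a uniformly random block $B_{j(e)}$, where the $j(e) \in [1/\eps]$ are chosen independently and uniformly. One should think of this as the product over the $\eps^{-1}$ blocks of the Erdős–Szemerédi-type colouring, but with the colour classes structured so that the bound is optimised.

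**The first-moment computation**

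First I would carry out the first-moment (union bound) calculation. Fix a $k$-set $S$ and a colour $i \in [r]$, say $i \in B_\ell$. The pair $(S,i)$ witnesses a monochromatic clique exactly when no edge $e \in \binom{S}{2}$ has $j(e) = \ell$, which happens with probability $(1 - \eps)^{\binom{k}{2}}$, since the $\binom{k}{2}$ choices $j(e)$ are independent and each avoids block $\ell$ with probability $1 - \eps$. Summing over all $k$-sets and all $r$ colours gives an expected number of monochromatic cliques at most
\[
r \binom{n}{k} (1-\eps)^{\binom{k}{2}} \;\le\; r \, n^k \exp\!\big(-\eps \tbinom{k}{2}\big),
\]
using $1 - \eps \le e^{-\eps}$. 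Taking $n = \exp(\delta \eps (k-1)/2)$ makes this $\le r \exp(-\eps k(k-1)/4) \to 0$ once $k \gtrsim \eps^{-1}\log r$, which is exactly the hypothesis $k \ge (C/\eps)\log r$. But this only yields $R_{r,s}(k) \ge \exp(\Omega(\eps k))$, matching the weak random bound — it is off by the crucial factor $\eps r = r - s$ in the exponent. The issue is that I have not used that a monochromatic clique forces *all* $r$ colours to be simultaneously avoided, which is a much rarer event when the blocks are large.

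**Extracting the extra factor of $r-s$**

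The main obstacle, and the heart of the argument, is to exploit this simultaneity. Here is the mechanism I would use. Fix a $k$-set $S$. For $S$ to contain *some* monochromatic clique, there must exist a block index $\ell \in [1/\eps]$ such that every colour $i \in B_\ell$ is avoided on all of $S$ — but a colour $i \in B_\ell$ is avoided on $S$ iff no edge of $S$ has $j(e) = \ell$, and this condition does not depend on $i$ within the block. So in fact $S$ contains a colour-$i$ monochromatic clique for *some* $i \in B_\ell$ iff it contains one for *every* $i \in B_\ell$, iff block $\ell$ is used by no edge of $S$. Hence the relevant bad event for $S$ is simply: $\exists \ell$ with $j(e) \neq \ell$ for all $e \in \binom{S}{2}$, which by a union bound over $\ell \in [1/\eps]$ has probability at most $\eps^{-1}(1-\eps)^{\binom{k}{2}}$, and summing over $k$-sets gives expected count $\le \eps^{-1} n^k e^{-\eps k(k-1)/2}$ — still no factor $r$!

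The resolution is that with the blocks as defined, the colouring simply does *not* achieve the $(r-s)^2/r$ bound, and one must use a genuinely different structure: **each edge forbids a pseudorandom, rather than block-structured, $(r-s)$-set**, chosen so that on any $k$-set the forbidden sets behave like an $\eps$-cover design. Concretely, following the code-theoretic insight of Conlon et al., I would take a collection of $(r-s)$-subsets $F_1, \dots, F_N \subseteq [r]$ forming a good covering code — every colour $i$ lies in a $\approx \eps$ fraction of them, and moreover any $t := C\eps^{-1}\log r$ of them cover $[r]$ — and colour each edge by an independent uniform $F(e) \in \{F_1,\dots,F_N\}$. Then for a fixed $k$-set $S$ and colour $i$, the probability $i$ is avoided on $S$ is $(1-\eps)^{\binom k2}$, but now the events across colours $i$ are *anticorrelated* through the covering property: if $\binom k2 \ge t$ then on any $k$-set the forbidden sets of any $t$ edges already cover $[r]$, so $S$ cannot be monochromatic at all once it has $\ge t$ edges, i.e. once $k \ge \sqrt{2t} \sim \eps^{-1/2}\sqrt{\log r}$ — too weak again.

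The correct bookkeeping, which I would now commit to, is the second-moment / alteration argument *within* each colour combined with the union over colours, exactly as in~\eqref{CFHMSV:lower} but with the cube replaced by a square through a sharper choice of code: pick the $F_j$ from a random linear code over an alphabet of size $\eps^{-1}$, use the Gilbert–Varshamov / Plotkin-type distance bound to guarantee that for a uniformly random pair $F(e), F(e')$ the union $F(e)\cup F(e')$ has size $\ge (1 - \eps^2 \cdot O(1)) r$ — hence on a random $k$-set, iterating over a near-perfect matching of $\binom k2$ we union $k/2$ independent forbidden sets and cover all but an $e^{-\Omega(\eps^2 r)}$-sized... at which point the union bound over $r$ colours and $n^k$ cliques closes provided $n^k \le \exp(\Omega(\eps^2 r k))$, i.e. $n \le \exp(\Omega(\eps^2 r))$ — which after also demanding $k\ge (C/\eps)\log r$ to absorb the $\log r$ and rounding losses, is precisely~\eqref{eq:setRamsey:lower}. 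So the proof structure is: (i) define the code-based random colouring; (ii) prove the key distance/covering lemma for the code ensuring pairwise unions are near-complete; (iii) on a fixed $k$-clique, chain $\Theta(k)$ near-independent forbidden sets along a matching to bound the per-colour monochromatic probability by $\exp(-\Omega(\eps^2 r k))$; (iv) union bound over $r$ colours and $\binom nk$ cliques. I expect step (ii)–(iii) — getting the *square* $\eps^2 r$ rather than the cube $\eps^3 r$, which is exactly the improvement over~\eqref{CFHMSV:lower} — to be the main obstacle, and the place where the new random (rather than product) colouring is essential.
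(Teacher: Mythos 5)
Your proposal takes a fundamentally different route from the paper — you try to push the code-theoretic product colouring of Conlon, Fox, He, Mubayi, Suk and Verstra\"ete from an exponent of $\eps^3 rk$ to $\eps^2 rk$ by choosing a ``better'' code — and the key steps do not hold up. Two concrete problems. First, your step (iii) asserts that chaining $\Theta(k)$ near-independent forbidden sets of size $\eps r$ along a matching drives the per-colour monochromatic probability down to $\exp(-\Omega(\eps^2 rk))$. But if the forbidden sets on the $k/2$ matching edges behave like independent uniform $(\eps r)$-subsets, a fixed colour $i$ is missed by all of them with probability $(1-\eps)^{k/2}\approx e^{-\eps k/2}$, which has no factor of $r$ in the exponent; the $r$ simply has no source in this accounting. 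Second, the intermediate claim that for two codewords ``the union $F(e)\cup F(e')$ has size $\ge(1-O(\eps^2))r$'' is impossible: $|F(e)\cup F(e')|\le 2\eps r$, which is far less than $(1-O(\eps^2))r$ for $\eps<\tfrac12$. And the strong covering property you float (any $t=C\eps^{-1}\log r$ distinct codewords cover $[r]$) cannot help on its own, since repeated codewords on the clique's edges defeat it, and codes with that property have too few codewords to be of use.

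The mechanism the paper actually uses to extract the extra factor of $r-s$ is entirely different and does not go through codes. For each colour $i$, one takes $G_i$ to be a random blow-up of $G(m,p)$ with $p=1-\Theta(\eps)$ via a random map $\phi_i\colon[n]\to[m]$, with $m=2^{\Theta(\eps k)}$ chosen just large enough that $G(m,p)$ is $K_k$-free, and $n=2^{\Theta(\eps^2 rk)}$. The crucial gain is that whether a $k$-set $S$ is monochromatic in colour $i$ depends only on the image $W=\phi_i(S)\subset[m]$ and on $H_i\sim G(m,p)$, so the union bound over cliques is over $\binom{m}{k}$ images, not $\binom{n}{k}$ vertex subsets — and $m$ is exponentially smaller than $n$, precisely by the factor $\eps r$ in the exponent you were missing. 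The remaining (and in fact harder) work in the paper is ensuring that each edge still receives at least $s$ colours: one declares an edge $e$ \emph{bad} if $|\{i:e\in G_i\}|<s$ and reassigns $\chi(e)$ to be the set of colours for which $e$ is crossing in $\phi_i$; one then has to show that no $k$-clique contains more than $t=\delta\eps k^2$ bad edges. This requires controlling the correlations induced by vertices landing in the same part of $\phi_i$, which is handled by a two-part argument (Lemmas~\ref{lem:small:XF} and~\ref{lem:XF:is:small}) involving a ``bottleneck event'' on degree-ordered initial segments. None of this machinery appears in your proposal, so there is a genuine missing idea, not just unfilled details.
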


Note that the bound~\eqref{eq:setRamsey:lower} matches the upper bound~\eqref{CFHMSV:upper} on $R_{r,s}(k)$, proved in~\cite{CFHMSV}, up to a factor of $O(\log r)$ in the exponent for all $s \le r - C\log r$. When $s \ge r - C\log r$ our method does not provide a construction, but in this case the bounds from~\cite{CFHMSV} only differ by a factor of order $(\log r)^2$ in the exponent, the lower bound coming from a simple random colouring. 

The lower bound on $k$ in Theorem~\ref{thm:setRamsey} is also not far from best possible, since if $k \le 1/\eps$ then the most common colour has density at least $1 - 1/k$, and therefore $R_{r,s}(k) \le k^2$, by Turán's theorem. We remark that the range in which Turán's theorem provides an optimal bound was investigated in detail by Alon, Erd\H{o}s, Gunderson and Molloy~\cite{AEGM} in the case $s = r - 1$. In Section~\ref{simple:sec} we will describe a simpler version of our construction which proves the lower bound
\begin{equation}\label{eq:simple:bound}
R_{r,s}(k) \ge \bigg( \frac{\eps (k-1)}{e} \bigg)^{\eps r/2}
\end{equation}
for all $r$, $s$ and $k$. Note that this bound matches the upper bound~\eqref{CFHMSV:upper} up to a factor of order $(\log r)^2$ when $3/\eps + 1 \le k \le (C / \eps) \log r$. Moreover, as long as $k \ge (1+\delta)/\eps + 1$ for some constant $\delta > 0$, we can use the same construction to prove (with a slightly more careful calculation) a lower bound of the form
\begin{equation}\label{eq:simple:bound2}
R_{r,s}(k) \ge 2^{\Omega(\eps r)}.
\end{equation}
Thus, writing $\tilde{\Theta}$ to hide poly-logarithmic factors of $r$, we obtain the following corollary.

\begin{corollary}\label{cor:general}
Let $r > s \ge 1$ and $\delta > 0$, and set $\eps = (r-s)/r$. We have
$$R_{r,s}(k) = 2^{\tilde{\Theta}(\eps^2 rk)}$$
for every $k \ge (1+\delta)/\eps + 1$. 
\end{corollary}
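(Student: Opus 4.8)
The plan is to deduce Corollary~\ref{cor:general} by combining the upper bound~\eqref{CFHMSV:upper} of Conlon, Fox, He, Mubayi, Suk and Verstra\"ete with three lower bounds, each of which is already established (or cited) in the paper and each of which is optimal up to polylogarithmic factors in the exponent in a different range of the parameters: Theorem~\ref{thm:setRamsey}, the bound~\eqref{eq:simple:bound2}, and the simple random colouring bound $R_{r,s}(k) \ge 2^{\Omega(\eps k)}$ noted in~\cite{CFHMSV}. Throughout I would use the identities $(r-s)^2/r = \eps^2 r$ and $r - s = \eps r$.

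For the upper bound I would simply note that $\min\{s, r-s\} \ge 1$, so that $\log\big(r/\min\{s, r-s\}\big) \le \log r$; then~\eqref{CFHMSV:upper} becomes $R_{r,s}(k) \le \exp\big( O(\eps^2 r k \log r) \big) = 2^{\tilde{O}(\eps^2 rk)}$. It is worth recording here that the exponent is at least $1$, since $k \ge (1+\delta)/\eps$ gives $\eps^2 rk \ge (1+\delta)\eps r = (1+\delta)(r-s) \ge 1$, so the bound is genuinely of the claimed form.

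For the lower bound I would split into three exhaustive cases. If $k \ge (C/\eps)\log r$ and $s \le r - C\log r$, then Theorem~\ref{thm:setRamsey} applies and gives $R_{r,s}(k) \ge 2^{\delta \eps^2 rk}$, with no logarithmic loss at all. If $k \ge (C/\eps)\log r$ but $s > r - C\log r$, then $\eps r = r - s < C\log r$, so $\eps^2 rk = \eps k \cdot \eps r < C\eps k \log r$, and the random colouring bound gives $R_{r,s}(k) \ge 2^{\Omega(\eps k)} = 2^{\Omega(\eps^2 rk/\log r)}$. Finally, if $k < (C/\eps)\log r$, then $\eps k < C\log r$, so $\eps^2 rk = \eps r \cdot \eps k < C\eps r\log r$, and~\eqref{eq:simple:bound2} --- which is available since $k \ge (1+\delta)/\eps + 1$ --- gives $R_{r,s}(k) \ge 2^{\Omega(\eps r)} = 2^{\Omega(\eps^2 rk/\log r)}$. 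In every case $R_{r,s}(k) \ge 2^{\Omega(\eps^2 rk/\log r)} = 2^{\tilde{\Omega}(\eps^2 rk)}$, which together with the upper bound proves the corollary.

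Since Corollary~\ref{cor:general} is essentially a repackaging of results proved or cited earlier, I do not expect a genuine obstacle; the real work is bookkeeping --- checking that the three cases are exhaustive over all $(r,s,k)$ with $k \ge (1+\delta)/\eps + 1$, and that in the latter two cases the crude estimates $2^{\Omega(\eps k)}$ and $2^{\Omega(\eps r)}$ really do lie within a polylogarithmic factor of $2^{\eps^2 rk}$ in the exponent. The one point I would confirm with a short computation is the range of validity of the random colouring bound: a union bound over the at most $n^k$ cliques and $r$ colours shows that a uniformly random $\chi \colon E(K_n) \to \binom{[r]}{s}$ contains no monochromatic $k$-clique whenever $n^k \cdot r \cdot (s/r)^{\binom{k}{2}} < 1$, which (using $\log(r/s) \ge \eps$) holds for $n \le 2^{\eps(k-1)/4}$ once $\eps k^2 \gtrsim \log r$; the latter is satisfied in the relevant case above because there $\eps k \ge C\log r$.
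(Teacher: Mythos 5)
Your proposal is correct and follows essentially the same route as the paper: the upper bound from~\eqref{CFHMSV:upper}, and the lower bound via the same three-way case split (Theorem~\ref{thm:setRamsey} when $k \ge (C/\eps)\log r$ and $s \le r - C\log r$, the simple random colouring when $s$ is within $C\log r$ of $r$, and Theorem~\ref{thm:simple:bound2} when $k \le (C/\eps)\log r$). The only difference is cosmetic: you verify the random-colouring bound and the polylogarithmic comparisons explicitly, whereas the paper cites~\cite{CFHMSV} for the former and leaves the latter implicit.
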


It would be interesting to determine the behaviour of $R_{r,s}(k)$ in the range $ \eps k = 1+o(1)$, especially in light of the initial progress made in~\cite{AEGM}. 



\section{The construction}\label{sec:construction}

In this section we will define the (random) colouring that we use to prove Theorem~\ref{thm:setRamsey}, and give an outline of the proof that it has the desired properties with high probability. The idea behind our construction, to let each colour be a random copy of some pseudorandom graph, was introduced in the groundbreaking work of Alon and R\"odl~\cite{AR} on multicolour Ramsey numbers, and has been used in several recent papers in the area~\cite{HW,MV,W,S}. However, our approach differs from that used in these previous works in several important ways; in particular, we will not count independent sets, and it will be important that our colour classes are chosen (almost) independently at random. We will discuss the novel aspects of our construction in more detail at the end of this section. 

Our first attempt at defining a colouring $\chi \colon E(K_n) \to \binom{[r]}{s}$ with no monochromatic $K_k$ is as follows: let the edges $e \in E(K_n)$ with $i \in \chi(e)$ be a random blow-up of the random graph $G(m,p)$, where $p = 1 - \Theta(\eps)$, and $m$ is chosen so that $G(m,p)$ is unlikely to contain a copy of $K_k$. The problem with this colouring is that there will be some `bad' edges which receive fewer than $s$ colours,\footnote{Note that for edges that receive more than $s$ colours, we can take an arbitrary subset of size $s$.} and we will therefore need to modify the construction by giving these edges extra colours. The key idea is to only give them the colours for which they are `crossing' edges, that is, not in the same part of the random blow-up for that colour.

We will next define the colouring precisely. Fix a sufficiently small\footnote{In fact taking $\delta = 2^{-5}$ would suffice, but we will not make any attempt to optimise the constants in Theorem~\ref{thm:setRamsey}.} constant $\delta > 0$, and set $C = 1/\delta^3$. Recall that $r - s = \eps r$, and define
$$m = 2^{\delta^2\eps k} \qquad \text{and} \qquad n = 2^{\delta^4 \eps^2 r k}.$$
Note that $\eps \sqrt{m} \ge k$, since $k \ge (C / \eps) \log r$ and $\eps \ge 1/r$, and by our choice of $C$. 

 
Set $p = 1 - 5\delta\eps$, and for each colour $i \in [r]$, let 
\begin{itemize}
\item $H_i$ be an independently chosen copy of the random graph $G(m,p)$, and\smallskip
\item $\phi_i \colon [n] \to [m]$ be an independently and uniformly chosen random function.
\end{itemize} 
Now define $G_i$ to be the (random) graph with vertex set $[n]$ and edge set
$$E(G_i) = \big\{ uv : \{\phi_i(u), \phi_i(v)\} \in E(H_i) \big\},$$
that is, a random blow-up of $H_i$, with parts given by $\phi_i$. Define a colouring $\chi'$ of $K_n$ by
$$\chi'(e)= \big\{ i \in [r] : e \in E(G_i) \big\},$$  
and define the set of \emph{bad} edges to be
\begin{equation}\label{def:B}
B = \big\{e \in E(K_n) : |\chi'(e)| < s \big\}.
\end{equation}
We will also say that an edge $e = uv \in E(K_n)$ is \emph{$i$-crossing} if $\phi_i(u) \ne \phi_i(v)$, and define
$$\kappa(e) = \big\{ i \in [r] : \text{$e$ is $i$-crossing} \big\}.$$
We can now define the colouring that we will use to prove Theorem~\ref{thm:setRamsey}. 

\begin{defn}\label{def:colouring}
For each $e \in E(K_n)$, we define the set of colours $\chi(e) \subset [r]$ by
  \[
    \chi(e) \,=\, \left\{
    \begin{array} {c@{\quad \textup{if} \quad}l}
      \chi'(e) & e \not\in B, \\[+1ex]
      \kappa(e) & e \in B.
    \end{array}\right.
  \]
\end{defn}

Our task is to show that with high probability $|\chi(e)| \ge s$ for every $e \in E(K_n)$, and moreover that $\chi$ contains no monochromatic copy of $K_k$. Before giving the details, let us briefly outline how we will go about proving these two properties. 

The first property, that $|\chi(e)| \ge s$ for every $e \in E(K_n)$, is a relatively straightforward consequence of the definition of $\chi$ and our choice of $m$. Indeed, by Definition~\ref{def:colouring} and~\eqref{def:B}, it will suffice to show that $|\kappa(e)| \ge s$ for every $e \in E(K_n)$, and this follows from a simple first moment calculation, using the fact that $n = m^{\delta^2 \eps r}$.

Proving that with high probability $\chi$ contains no monochromatic copy of $K_k$ is more difficult, and will be the main task of Section~\ref{proof:sec}. Cliques with few bad edges are easily dealt with using Chernoff's inequality, so let us focus here on cliques with many bad edges, where `many' means at least $t = \delta \eps k^2$. The difficulty in this case is that the events $\{e \in B\}$ and $\{f \in B\}$ are correlated, since the endpoints may be in the same part in some of the random partitions. In particular, if $\phi_i(u) = \phi_i(v)$ for many colours $i \in [r]$ and many pairs $\{u,v\}$ of high-degree vertices of $F$, the graph of bad edges in our monochromatic $k$-clique, then Chernoff's inequality will not provide strong enough bounds on our large deviation events. 

To deal with this issue, we will use the randomness of the partitions $\phi_1,\ldots,\phi_r$ to show (see Lemma~\ref{lem:XF:is:small}) that there is not too much `clustering' of the vertices of \emph{any} graph $F \subset K_n$ with $k$ vertices and $t$ edges. To do so, we will not be able to use a simple union bound over all graphs, since there will be too many choices for the low-degree vertices of $F$; instead, we will need to find a suitable `bottleneck event' for each $F$, and apply the union bound to these events. Roughly speaking, we will find an initial segment $A$ of the vertices of $F$, ordered according to their degrees, with the following property: there are $\delta \eps r |A|$ pairs $v \in A$ and $i \in [r]$ such that there exists an `earlier' vertex $u \in A$ with $\phi_i(u) = \phi_i(v)$. We will then sum over the choices of $A$, using the fact that for each such pair $v \in A$ and $i \in [r]$, this event 
(conditioned on the choices of $\phi_i(u)$ for earlier $u$) has probability at most $k/m$. 

On the other hand, when there is not too much clustering of the high-degree vertices of $F$ in the random partitions $\phi_1,\ldots,\phi_r$, we will use the randomness in the choice of $H_1,\ldots,H_r$ to bound the probability that there are more bad edges than expected. More precisely, we will choose one vertex from each cluster in each colour, and apply Chernoff's inequality, noting that the edges between these vertices are independent. 

The proof outlined above diverges from the method of Alon and R\"odl~\cite{AR} (and the more recent constructions of~\cite{HW,MV,W,S}) in several important ways. Of these new ideas, we note in particular that our choice of $\kappa(e)$ for the bad edges is somewhat subtle (and perhaps surprising), since $\kappa(e) = [r]$ for almost all edges $e \in E(K_n)$, which seems very wasteful. The point is that there are very few bad edges, and by only including crossing colours in $\chi(e)$ we ensure that there is not too much dependence between the colours of different edges. It is also important that the random graphs $H_1,\ldots,H_r$ are chosen independently; if we used the same random graph for each colour, then we would not be able to prove a sufficiently strong bound on the probability that a clique has too many bad edges.   

\section{The proof}\label{proof:sec}

We begin by observing that with high probability $|\chi(e)| \ge s$ for every $e \in E(K_n)$.

\begin{lemma}\label{lem:chi:always:big}
With high probability, $|\chi(e)| \geq s$ for every $e \in E(K_n)$.
\end{lemma}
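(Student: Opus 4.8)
The plan is to reduce the statement to showing that $|\kappa(e)| \ge s$ for every $e \in E(K_n)$, since by Definition~\ref{def:colouring} we have $|\chi(e)| = |\chi'(e)| \ge s$ whenever $e \notin B$ (by definition of $B$), and $|\chi(e)| = |\kappa(e)|$ whenever $e \in B$. So fix an edge $e = uv$. The event that $e$ is \emph{not} $i$-crossing is exactly $\{\phi_i(u) = \phi_i(v)\}$, which has probability $1/m$ since $\phi_i$ is uniform on $[m]^{[n]}$ and the two values $\phi_i(u), \phi_i(v)$ are independent and uniform on $[m]$. Moreover, these events are independent across the $r$ colours, since the $\phi_i$ are chosen independently. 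Hence $r - |\kappa(e)| = |\{i : \phi_i(u) = \phi_i(v)\}|$ is a binomial random variable with parameters $r$ and $1/m$.

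First I would bound the probability that $|\kappa(e)| < s$, i.e.\ that $r - |\kappa(e)| > r - s = \eps r$, for a single edge $e$. Since $\E[r - |\kappa(e)|] = r/m$ and $m = 2^{\delta^2 \eps k}$ is enormous compared to anything polynomial in $r$ (recall $k \ge (C/\eps)\log r$, so $m \ge r^{\delta^2 C} = r^{1/\delta}$, which is much larger than $r$), the expected number of non-crossing colours is at most $r/m \le 1$, far below the threshold $\eps r \ge 1$. A crude bound suffices: the probability that at least $\eps r$ of the $r$ independent coin flips come up heads is at most $\binom{r}{\eps r} m^{-\eps r} \le (er/(\eps r))^{\eps r} m^{-\eps r} = (e/(\eps m))^{\eps r}$. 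Since $\eps m \ge k\sqrt{m} \gg m^{1/2}$ and $m$ is large, this is at most, say, $m^{-\eps r/3} \le n^{-2}$ for our choice $n = m^{\delta^2 \eps r}$ (using that $\delta^2 \eps r \cdot \tfrac13 \cdot$ something dominates $2\delta^4\eps^2 rk$ — I would double-check the constants here, but there is ample room).

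Then I would finish with a union bound over the at most $\binom{n}{2} \le n^2$ edges of $K_n$: the probability that some edge $e$ has $|\kappa(e)| < s$ is at most $n^2 \cdot n^{-2-\alpha} = n^{-\alpha} \to 0$ for a suitable $\alpha > 0$, hence with high probability $|\kappa(e)| \ge s$ for all $e$, and therefore $|\chi(e)| \ge s$ for all $e$.

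I do not expect any genuine obstacle here; this lemma is, as the outline in Section~\ref{sec:construction} indicates, a routine first-moment (union bound) argument, and the only mild care needed is verifying that the numerical slack between $m^{\eps r}$ and $n^2$ is comfortable — which it is, since $n = m^{\delta^2 \eps r}$ with $\delta^2 < 1$, so even $m^{\eps r}$ alone beats $n^2 = m^{2\delta^2 \eps r}$ provided $\delta^2 < 1/2$, and in fact we have the far stronger tail factor $(e/(\eps m))^{\eps r}$ to spare. The one place to be slightly attentive is the estimate $\eps m \ge k\sqrt m \ge \sqrt m$, which the authors have already recorded, guaranteeing $e/(\eps m) \le m^{-1/2}$ and hence making the per-edge failure probability superpolynomially small in $n$.
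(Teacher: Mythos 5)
Your proposal is correct and follows essentially the same route as the paper: reduce to showing $|\kappa(e)|\ge s$ for every edge, bound the per-edge failure probability by $\binom{r}{\eps r}m^{-\eps r}\le(e/(\eps m))^{\eps r}$ using that the $r$ events $\{\phi_i(u)=\phi_i(v)\}$ are independent each with probability $1/m$, note this is much smaller than $n^{-3}$ by the choice of parameters, and finish with a union bound over the $\le n^2$ edges. The paper records the same inequality $(e/(\eps m))^{\eps r}\le 2^{-\delta^3\eps^2 rk}\le n^{-3}$ and invokes Markov; your numerical sanity check is a bit looser in the wording but the slack you point to (essentially $\delta^2\ll 1$) is the same one the paper uses.
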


\begin{proof}
Note first that, by the definition of $B$, if $e$ is not a bad edge then $\chi(e) = \chi'(e)$ and $|\chi'(e)| \ge s$. It will therefore suffice to show that 
\begin{equation}\label{eq:chi:always:big}
\Pr\big( |\kappa(e)| < s \big) \le \frac{1}{n^3}
\end{equation}
for each edge $e \in E(K_n)$. To do so, note that for each $i \in [r]$ we have
$$\Pr\big( i \not\in \kappa(e) \big) = \frac{1}{m},$$
all independently, by the definition of the functions $\phi_i$. Recalling that $r-s = \eps r$ and that $\eps m \ge \sqrt{m} = 2^{\delta^2 \eps k/2}$, since $k \geq (C / \eps) \log r$ and $C = 1/\delta^3$, it follows that
$$\Pr\big( |\kappa(e)| < s \big) \le \binom{r}{\eps r} \bigg( \frac{1}{m} \bigg)^{\eps r} \le \bigg( \frac{e}{\eps m} \bigg)^{\eps r} \le \, 2^{-\delta^3 \eps^2 rk},$$
Since $n = 2^{\delta^4 \eps^2 r k}$, we obtain~\eqref{eq:chi:always:big}, and so the lemma follows by Markov's inequality. 
  \end{proof}

We are left with the (significantly more challenging) task of showing that, with high probability, $\chi$ contains no monochromatic copy of $K_k$. We will first deal with the (easier) case in which the clique has few bad edges. Recall that $t = \delta \eps k^2$. 

  
\begin{lemma}\label{lemma:few-bad}
With high probability, the colouring $\chi$ contains no monochromatic $k$-clique with at most $t$ bad edges.
\end{lemma}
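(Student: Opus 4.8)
The plan is to take a union bound over all choices of the $k$-clique $S$ and over the (small) set $\cE_S$ of bad edges inside it, so that once we have fixed $S$ and a colour $i\in[r]$ for which we hope to find a monochromatic $K_k$, the edges of $S$ that are \emph{not} in $\cE_S$ must all lie in $E(G_i)$. Since $|\cE_S| \le t = \delta\eps k^2$, the clique $S$ still contains at least $\binom{k}{2} - t \ge (1 - 2\delta\eps)\binom{k}{2}$ edges which are forced to be in colour $i$ via $\chi'$, i.e.\ which are genuine edges of the blow-up $G_i$.

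First I would reduce to a statement about the base graph $H_i = G(m,p)$. The key point is that if $S$ spans $\binom{k}{2} - |\cE_S|$ edges of the blow-up $G_i$, then the images $\phi_i(S) \subseteq [m]$ span at least as many edges of $H_i$ (counted with multiplicity collapsed), so in particular the vertices of $\phi_i(S)$ that are pairwise distinct — of which there are $k' := |\phi_i(S)|$ — span a clique-like structure in $H_i$ missing only $O(t)$ edges. Since $p = 1 - 5\delta\eps$ and $m = 2^{\delta^2\eps k}$, one checks that $G(m,p)$ is very unlikely to contain a set of $k$ vertices spanning all but $t$ of the $\binom{k}{2}$ edges: the expected number of such sets is at most $\binom{m}{k}\binom{\binom{k}{2}}{t} p^{\binom{k}{2} - t}$, and plugging in $p^{\binom{k}{2}} \le \exp(-5\delta\eps \cdot (1-o(1))\binom{k}{2})$ against $\binom{m}{k} \le m^k = 2^{\delta^2\eps k^2}$ and $\binom{\binom{k}{2}}{t} \le (ek^2/2t)^t = 2^{O(\eps k^2 \log(1/\delta))}$ shows the dominant term is $\exp(-\Theta(\delta \eps k^2))$, which beats everything once $\delta$ is small and $k \ge (C/\eps)\log r$. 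One subtlety is that $\phi_i$ might merge several vertices of $S$ into one vertex of $[m]$; but merging only \emph{removes} constraints, and the number of merged pairs among a fixed $S$ is itself governed by the $\phi_i$'s — for few-bad-edges cliques we can simply absorb any such collision into the bad-edge budget, or alternatively first condition on $\phi_i$ being injective on $S$, which happens with probability $\ge 1 - k^2/m$ and costs nothing in the union bound.

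Assembling the union bound: the number of pairs $(S, \cE_S)$ is at most $\binom{n}{k}\binom{\binom{k}{2}}{t} \le n^k \cdot 2^{O(\eps k^2 \log(1/\delta))}$, and $n^k = 2^{\delta^4 \eps^2 r k^2}$; multiplying by $r$ for the choice of colour and by the per-configuration failure probability $\exp(-\Theta(\delta\eps k^2))$ derived above, I would check that $\delta^4 \eps^2 r k^2 \le \delta^4 \eps k^2 \cdot (\eps r) \le \delta^4 \eps k^2 \cdot r$ is dominated by $\delta \eps k^2$ provided $\delta$ is small relative to the implied constants — here it is important that the exponent in $n$ carries a $\delta^4$ while the gain carries only a single $\delta$, so there is room to spare. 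This gives total failure probability $o(1)$, as desired.

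The main obstacle I anticipate is \textbf{not} the probabilistic estimate on $G(m,p)$ (that is a routine first-moment computation) but rather handling the interaction between the blow-up maps $\phi_i$ and the base graphs $H_i$ cleanly — specifically, making sure that ``$S$ spans $\ge \binom{k}{2}-t$ edges of $G_i$'' really does translate into a low-probability event about $H_i$ even when $\phi_i$ is far from injective on $S$. In the worst case all of $S$ could map to a single vertex of $[m]$, giving no edges of $H_i$ at all and hence no constraint — but then $S$ spans \emph{zero} edges of $G_i$, so it has $\binom{k}{2} \gg t$ bad (in fact non-)edges and is excluded by hypothesis. Quantifying this tradeoff — that a clique with $\le t$ bad edges forces $\phi_i$ to be ``nearly injective'' on $S$ in colour $i$, hence $k' = \Omega(k)$ and $H_i$ must contain a near-clique on $\Omega(k)$ vertices — is the one place where a little care is needed, but it is elementary given the bound $t = \delta\eps k^2 \ll \binom{k}{2}$.
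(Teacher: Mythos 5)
There is a genuine gap, and it is exactly at the step you flag as having ``room to spare''. Your assembled union bound pays a factor $\binom{n}{k}\approx n^k=2^{\delta^4\eps^2rk^2}$ for the choice of the clique $S\subseteq[n]$, against a per-configuration gain of only $\exp(-\Theta(\delta\eps k^2))$. The comparison you make, that $\delta^4\eps^2rk^2$ is dominated by $\delta\eps k^2$ for small $\delta$, is false: the ratio is $\delta^3\eps r$, and since the theorem's hypotheses give $\eps r=r-s\geq C\log r$ with $C=1/\delta^3$, we have $\delta^3\eps r\geq\log r\gg 1$. So the union bound over $k$-subsets of $[n]$ loses by a factor of roughly $e^{\delta\eps k^2\log r}$ and the total failure probability does not tend to zero; making $\delta$ smaller only makes $C$ larger and does not help. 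The whole point of the blow-up construction is that one must \emph{not} pay for choosing $S$ inside $[n]$: the paper's proof observes that if $S$ is monochromatic in colour $i$ under $\chi$, then \emph{every} edge of $S$ (bad or not) is $i$-crossing, since bad edges receive $\kappa(e)$ and good edges receive $\chi'(e)\subseteq\kappa(e)$; hence $\phi_i$ is automatically injective on $S$, the event depends only on $H_i$ and on the $k$-set $W=\phi_i(S)\subseteq[m]$, and the union bound is taken over the $r\binom{m}{k}\leq r\,2^{\delta^2\eps k^2}$ pairs $(i,W)$, which the Chernoff gain $e^{-\delta^2\eps k^2}$ does beat because $\delta^3\eps k\geq\log r$. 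This same observation also eliminates your worries about non-injective $\phi_i$ (no ``absorbing collisions'' or conditioning on injectivity is needed, and the latter would in any case interact awkwardly with a union bound over $S$).

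A secondary quantitative problem: your per-configuration estimate $\binom{m}{k}\binom{\binom{k}{2}}{t}p^{\binom{k}{2}-t}$ is too lossy when $\eps$ is small. The entropy factor is $\binom{\binom{k}{2}}{t}\leq(e/(2\delta\eps))^{\delta\eps k^2}=2^{\Theta(\delta\eps k^2\log(1/(\delta\eps)))}$, not $2^{O(\eps k^2\log(1/\delta))}$; since $\eps$ may be as small as $C\log r/r$, the hidden $\log(1/\eps)$ term can exceed the gain $p^{\binom{k}{2}}=e^{-\Theta(\delta\eps k^2)}$, so this expression need not be $\exp(-\Theta(\delta\eps k^2))$ at all (you dropped the compensating factor $(1-p)^{j}$ for the missing edges when you enumerated them). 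The paper avoids enumerating the bad edges altogether: it treats ``$e(H_i[W])\geq\binom{k}{2}-t>p\binom{k}{2}+\delta\eps k^2$'' as a single binomial large-deviation event for $e(H_i[W])\sim\mathrm{Bin}\bigl(\binom{k}{2},p\bigr)$ and applies Chernoff's inequality directly, getting $e^{-\delta^2\eps k^2}$ with no entropy cost. This slip is fixable by using Chernoff, but the union bound over $S\subseteq[n]$ is a structural flaw that the argument cannot survive as written.
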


\begin{proof}
Suppose $\chi$ contains a monochromatic clique $S = \{v_1, \ldots, v_k\}$ of colour $i \in [r]$ such that at most $t$ of the edges $e \in {S \choose 2}$ are bad. For each $j \in [k]$, let $w_j = \phi_i(v_j) \in V(H_i)$, and observe that the set $W = \{w_1, \ldots, w_k\}$ has size $k$, since by Definition~\ref{def:colouring}, and noting that $\chi'(e) \subset \kappa(e)$, every edge $e \in E(K_n)$ such that $i \in \chi(e)$ is $i$-crossing. 

Now, if $e = v_jv_\ell \in {S \choose 2}$ is not a bad edge, then $i \in \chi(e) = \chi'(e)$, and hence $w_j w_\ell \in E(H_i)$. Since there are at most $t$ bad edges in ${S \choose 2}$, it follows that 
$$e\big( H_i[W] \big) \ge \binom{k}{2} - t \, > \, p {k \choose 2} + \delta \eps k^2,$$
since $p = 1 - 5\delta \eps$ and $t = \delta \eps k^2$. Since $H_i[W] \sim G(k,p)$, it follows from Chernoff's inequality that this event has probability at most $e^{-\delta^2\eps k^2}$. Therefore, taking a union bound over colours $i \in [r]$ and sets $W \subset V(H_i)$ of size $k$, the probability that $\chi$ contains a monochromatic clique with at most $t$ bad edges is at most
\begin{equation}\label{eq:expected:manyedges:H}
r \binom{m}{k} e^{-\delta^2\eps k^2} \le r \big( 2^{\delta^2 \eps k} \cdot e^{-\delta^2 \eps k} \big)^k.
\end{equation}
Since $\delta^3 \eps k \ge \log r$, the right-hand side of~\eqref{eq:expected:manyedges:H} tends to zero as $k \to \infty$, as required.
\end{proof}

It remains to prove the following lemma, which is not quite so straightforward.

\begin{lemma}\label{lemma:many-bad}
With high probability, every $k$-clique contains at most $t$ bad edges.
\end{lemma}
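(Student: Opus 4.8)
The plan is to bound, for each $k$-clique $S$, the probability that it contains more than $t = \delta \eps k^2$ bad edges, and then apply a union bound over cliques. Fix such a clique $S$ and condition on the random partitions $\phi_1,\ldots,\phi_r$. The key point, established by Lemma~\ref{lem:XF:is:small} (whose statement the outline promises), is that with very high probability there is no graph $F \subset K_n$ on $k$ vertices and $t$ edges that exhibits too much clustering of its vertices in the partitions; more precisely, writing $X_F$ for the number of pairs $(v,i)$ with $v \in V(F)$ and $i \in [r]$ such that some earlier vertex $u$ (in the degree-ordering of $F$) satisfies $\phi_i(u) = \phi_i(v)$, we have $X_F \le \delta \eps r k$ for every such $F$, with high probability. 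I would first record this as Lemma~\ref{lem:XF:is:small}, prove it via the ``bottleneck event'' argument sketched in Section~\ref{sec:construction} (find an initial segment $A$ of $V(F)$ with $\delta \eps r |A|$ such colliding pairs, union over choices of $A$, and use that each colliding pair has conditional probability $\le k/m$), and then use it as a black box.

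Granting no excess clustering, I would fix the clique $S$, let $F$ be the (random) graph of bad edges inside $S$, and work conditionally on $\phi_1,\ldots,\phi_r$ restricted to $V(S)$. For each colour $i$, the partition $\phi_i$ groups $V(S)$ into clusters; choose one representative vertex from each cluster. The crucial observation is that the events ``$e \in B$'' for edges $e$ spanning two distinct $\phi_i$-clusters are governed by the independent coordinates of the graphs $H_1,\ldots,H_r$, so restricting attention to representatives makes the relevant edge events independent and Chernoff applies. One still has to account for edges $e = uv$ that are \emph{not} $i$-crossing for many $i$ — these are exactly the edges contributing to clustering — but the bound $X_F \le \delta \eps rk$ caps how many bad-edge-incidences can be ``explained'' by collisions rather than by $H_i$ failing to contain the relevant edge. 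So after removing the collisions, a bad edge still forces a deficiency of edges among representatives in at least $\eps r - (\text{small})$ colours, and the probability of that many deficiencies (for a fixed $F$ with $\ge t$ edges) is $\exp(-\Omega(\delta \eps k^2 \cdot \eps r))$-type small by Chernoff on the independent $H_i$-edges.

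Finally I would assemble the union bound. The number of $k$-cliques is $\binom{n}{k} \le n^k = 2^{\delta^4 \eps^2 r k \cdot k}$, and for a clique to be monochromatic of some colour with $\ge t$ bad edges we also multiply by $r$ (the colour) and by the number of choices for the bad-edge graph $F$, which is at most $2^{k^2}$. Against this we have the per-clique failure probability of order $\exp(-c \delta \eps^2 r k^2)$ from the previous paragraph, which — for $\delta$ small enough relative to the absolute constant $c$, and using $\delta^3 \eps k \ge \log r$ so the factor $r$ is harmless — dominates, and the whole sum tends to $0$. Combined with Lemma~\ref{lemma:few-bad}, this shows that with high probability no $k$-clique has more than $t$ bad edges, proving Lemma~\ref{lemma:many-bad}.

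The main obstacle is the correlation structure hidden inside Lemma~\ref{lem:XF:is:small}: a naive union bound over all $k$-vertex, $t$-edge graphs $F$ fails because the low-degree vertices of $F$ can be chosen in roughly $n^{k}$ ways, which is far too many to absorb. The real work is in the ``bottleneck'' reduction — identifying, for each $F$, a short prefix $A$ of its vertices (ordered by degree) that already witnesses enough colliding pairs, so that the union bound runs over the much smaller family of such prefixes rather than over all of $F$. Getting the counting of prefixes to beat the collision probability $(k/m)^{\delta \eps r |A|}$, while keeping the thresholds ($t = \delta \eps k^2$, $X_F \le \delta \eps r k$, $m = 2^{\delta^2 \eps k}$, $n = 2^{\delta^4 \eps^2 r k}$) mutually consistent, is the delicate part; everything after that is Chernoff on independent coordinates of the $H_i$.
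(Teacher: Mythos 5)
Your overall strategy is the same as the paper's (control clustering of the $\phi_i$'s via a prefix/bottleneck union bound, then apply Chernoff to the independent $H_i$-coordinates for ``clean'' edge--colour pairs, then a union bound), but the clustering statement you formulate is too weak to run the second step. You define $X_F$ as the \emph{number} of colliding pairs $(v,i)$ and claim $X_F \le \delta\eps rk$ suffices to ``cap how many bad-edge-incidences can be explained by collisions''. The quantity that actually gets removed from the Chernoff count is degree-weighted: a colliding pair $(v,i)$ contaminates up to $d_F(v)$ edge--colour pairs, so the contamination is $\sum_i\sum_{v\in Q_i(F)} d_F(v)$, and this must be compared with the $\ge \eps rt = \delta\eps^2 rk^2$ missing pairs forced by $F\subset B$. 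Since $F$ has only $t=\delta\eps k^2$ edges, they can be concentrated on $\sim \delta\eps k$ vertices of degree $\sim k$; your unweighted cap $\delta\eps rk$ then allows contamination of order $\delta\eps rk\cdot k = rt \gg \eps rt$, i.e.\ \emph{all} of the required missing pairs could a priori be explained by collisions, and Chernoff has nothing to bite on. (Relatedly, your per-edge claim that a bad edge ``still forces a deficiency in at least $\eps r - \textrm{small}$ colours'' is false: a single edge can be bad purely because its endpoints collide in $\eps r$ colours; only the aggregate count over $E(F)\times[r]$ is controlled.) This degree issue is exactly the heart of the paper's Lemma~\ref{lem:XF:is:small}: there $X_F$ is defined as the weighted sum $\sum_i\sum_{v\in Q_i(F)}d_F(v)$ with threshold $\eps rt/2$, and the dyadic degree classes $A_j(F)$ together with the Claim are what convert the prefix bottleneck bound (few colliding pairs in every initial segment of the degree order) into the weighted bound. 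Your sketch names the prefix idea but, with the unweighted conclusion, it skips the part of the argument that makes the high-degree vertices harmless — precisely the difficulty you yourself flag in your last paragraph.

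A second, more localized problem is your union bound: bounding the number of choices of the bad-edge graph by $2^{k^2}$ is too lossy. The per-$F$ Chernoff bound is only $e^{-\Theta(\eps r t)} = e^{-\Theta(\delta\eps^2 rk^2)}$, and in the main regime of interest $\eps = \Theta((\log r)/r)$ one has $\eps^2 r \to 0$, so $2^{k^2}e^{-c\delta\eps^2 rk^2}$ does not tend to zero. You must use $e(F)=t$ and count subgraphs by $\binom{\binom{k}{2}}{t} \le (e/(\delta\eps))^{\delta\eps k^2}$, as in the paper's Lemma~\ref{lem:small:XF}, where the factor $\delta\eps\log(1/(\delta\eps))k^2$ in the exponent is beaten by $\delta^2\eps^2 rk^2$ thanks to $\eps r \ge C\log r$ and $C=1/\delta^3$. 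Both gaps are repairable by adopting the paper's weighted $X_F$ and its counting, but as written the proposal does not close the case of many bad edges.
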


In other words, our task is to show that, with high probability, there does not exist a graph $F$ in the family
$$\mathcal{F} = \big\{ F \subset K_n \, : \, v(F) = k \,\text{ and }\, e(F) = t \big\}$$
such that $F \subset B$.\footnote{Here, and below, we abuse notation slightly by treating the set of bad edges $B$ as a graph.} We will not be able to prove this using a simple 1st moment argument, summing over all graphs $F \in \cF$, since the probability of the event $\{ F \subset B \}$ is not always sufficiently small. Instead, we will need to identify a `bottleneck event' for each $F \in \cF$. 

To do so, let us first choose an ordering $\prec_F$ on the vertices of $F$ satisfying 
$$u \prec_F v \qquad \Rightarrow \qquad d_F(u) \ge d_F(v).$$
In other words, we order the vertices according to their degrees in $F$, breaking ties arbitrarily. Now, define
$$Q_i(F) = \big\{ v \in V(F) \,:\, \exists \; u \in V(F) \,\text{ with }\, u \prec_F v \,\text{ such that }\, \phi_i(u) = \phi_i(v)\big\}$$  
to be the set of vertices which share a part of $\phi_i$ with another vertex of $F$ that comes earlier in the order $\prec_F$. We remark that if $u \prec_F v$, then $u \ne v$. 

We will bound the probability in two different ways, depending on the size of 
$$X_F = \sum_{i = 1}^r \sum_{v \in Q_i(F)} d_F(v).$$
When $X_F$ is large, we will find an initial segment $A$ of the order $\prec_F$ such that there are at least $\delta \eps r |A|$ pairs $i \in [r]$ and $v \in Q_i(F) \cap A$. First, however, we will deal with the case in which $X_F$ is small, where we can use a simple union bound.


\begin{lemma}\label{lem:small:XF}
With high probability, there does not exist $F \in \mathcal{F}$ with 
$$X_F \le \frac{\eps r t}{2} \qquad \text{and} \qquad F \subset B.$$
\end{lemma}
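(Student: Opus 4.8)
The plan is to prove Lemma~\ref{lem:small:XF} by a union bound over the family $\mathcal{F}$, bounding $|\mathcal{F}| \le \binom{n}{k}\binom{\binom{k}{2}}{t}$ and showing that for each fixed $F \in \mathcal{F}$ the probability of the event $\{X_F \le \eps rt/2 \text{ and } F \subset B\}$ is small enough to survive this union. Since $n = 2^{\delta^4\eps^2 rk}$ is only singly exponential in $\eps^2 rk$, and each non-edge probability $1-p = 5\delta\eps$ is tiny, there should be ample room, provided we can distil a clean large-deviation event from ``$F \subset B$''.

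First I would unpack what ``$F\subset B$'' gives us together with the hypothesis $X_F \le \eps rt/2$. If $F \subset B$ then every edge $uv \in E(F)$ is bad, so $uv \notin E(G_i)$ for at least $\eps r + 1$ colours $i$; call such a pair $(uv,i)$ a \emph{miss}, so that there are at least $\eps rt$ misses in total. Each miss is either \emph{non-crossing} ($\phi_i(u) = \phi_i(v)$) or \emph{crossing} ($\phi_i(u) \ne \phi_i(v)$ but $\{\phi_i(u),\phi_i(v)\} \notin E(H_i)$). For a non-crossing miss $(uv,i)$, the $\prec_F$-later of $u$ and $v$ lies in $Q_i(F)$, and charging the miss to that vertex (each vertex $v$ receives at most $d_F(v)$ misses per colour) shows that the number of non-crossing misses is at most $\sum_{i} \sum_{v \in Q_i(F)} d_F(v) = X_F \le \eps rt/2$. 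Hence at least $\eps rt/2$ of the misses are crossing: writing $m_{i,\pi}$ for the number of edges of $F$ lying over a part-pair $\pi$ in the partition $\phi_i$, the event in question forces $\sum_{i,\pi} m_{i,\pi}\cdot\1\big(\pi \notin E(H_i)\big) \ge \eps rt/2$.

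The second step is to bound the probability of this last event. Here I would condition on the partitions $\phi_1,\ldots,\phi_r$ (which determine $X_F$ and every $m_{i,\pi}$) and restrict to those with $X_F \le \eps rt/2$. Conditionally, $\sum_{i,\pi} m_{i,\pi}\1(\pi \notin E(H_i))$ is a sum of independent $\mathrm{Bernoulli}(1-p)$ random variables, one per covered part-pair, weighted by the $m_{i,\pi}$; its conditional mean is $(1-p)\sum_i T_i$, where $T_i$ is the number of $i$-crossing edges of $F$, hence at most $5\delta\eps rt$, a factor $\tfrac{1}{10\delta}$ below the target $\eps rt/2$. A Chernoff/Bernstein-type tail bound for weighted sums of independent Bernoullis should then give the deviation bound we need, \emph{provided the weights $m_{i,\pi}$ are not too large}; and this is exactly where restricting to $X_F \le \eps rt/2$ enters, since a part-pair $\pi = \{a,b\}$ carrying $m$ edges of $F$ has $\max(|V_a \cap V(F)|, |V_b \cap V(F)|) \ge \sqrt m$, and so forces at least $\sqrt m - 1$ vertices of $F$ into $Q_i(F)$, each contributing its $F$-degree to $X_F$. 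Feeding the resulting deviation bound into the union bound over $\mathcal{F}$ completes the proof.

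The main obstacle is precisely this last step: converting ``$X_F \le \eps rt/2 \Rightarrow$ limited clustering of $V(F)$'' into a large-deviation estimate strong enough to beat $|\mathcal{F}|$. The difficulty is that $(1-p)^{-1} = 1/(5\delta\eps)$ can be large when $\eps$ is small, so even a few part-pairs of large multiplicity could in principle dominate the weighted Bernoulli sum; one must play this against the fact that $k \ge (C/\eps)\log r$, so that $m = 2^{\delta^2\eps k}$ is correspondingly enormous, and use the precise numerical choices of $m$, $n$, $p$ and $t$ rather than just the qualitative bound $m_{i,\pi} \le (X_F+1)^2$. Away from this point the argument is the ``simple union bound'' advertised before the lemma.
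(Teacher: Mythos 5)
Your reduction is sound as far as it goes, and up to the halfway point it coincides with the paper's argument: you condition on $\phi_1,\ldots,\phi_r$, keep only the randomness of $H_1,\ldots,H_r$, note that $F \subset B$ forces at least $\eps r t$ pairs $(e,i)$ with $e \notin E(G_i)$, and use $X_F \le \eps r t/2$ to discard the non-crossing misses. But the step you yourself flag as ``the main obstacle'' is where the proof actually lives, and the route you sketch for it does not go through. After your reduction you must bound the upper tail of the weighted sum $\sum_{i,\pi} m_{i,\pi}\,\1\big(\pi \notin E(H_i)\big)$, and the constraint $X_F \le \eps r t/2$ does not make the multiplicities small in any useful sense: if $F$ contains a vertex of degree about $k$ whose neighbours are leaves, then placing all its neighbours in a single part of $\phi_i$ creates one part-pair of weight about $k$ at a cost of only about $k$ to $X_F$, so within the budget $\eps r t/2 = \delta\eps^2 rk^2/2$ there can be of order $\delta\eps^2 rk$ colours carrying a weight-$k$ pair. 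With maximum weight of order $k$, a Bernstein/Chernoff bound for the weighted sum yields an exponent of order $(\eps r t/k)\log(1/\delta) = O(\delta\eps^2 rk\log(1/\delta))$, which is smaller than $\log\binom{n}{k} = \Theta(\delta^4\eps^2 rk^2)$ as soon as $k \gg \delta^{-3}\log(1/\delta)$, i.e.\ throughout the range $k \ge (C/\eps)\log r$. So no choice of constants rescues a bound that depends only on the maximum weight; the heavy pairs must be removed, not merely bounded.

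The paper's fix is a small but essential twist on your accounting: discard \emph{every} pair $(e,i)$ for which at least one endpoint of $e$ lies in $Q_i(F)$, not just the non-crossing ones. Per colour there are at most $\sum_{v \in Q_i(F)} d_F(v)$ such pairs, so in total at most $X_F \le \eps r t/2$ are discarded, and at least $\eps r t/2$ misses survive among the pairs $Y = \{(uv,i) : u,v \notin Q_i(F)\}$. The gain is that, for each fixed $i$, the vertices outside $Q_i(F)$ lie in pairwise distinct parts of $\phi_i$, so distinct edges of $\{e : (e,i) \in Y\}$ correspond to distinct potential edges of $H_i$: all multiplicities equal $1$, the surviving miss-count is exactly $\mathrm{Bin}(|Y|,1-p)$ with $|Y| \le rt$ and mean at most $5\delta\eps rt$, and an unweighted Chernoff bound gives $e^{-\delta\eps rt}$ per $F$, which beats $\binom{n}{k}\binom{\binom{k}{2}}{t}$. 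In short, the heavy part-pairs that ruin your weighted sum are precisely the pairs the paper throws away, at a total cost already paid for by the hypothesis $X_F \le \eps r t/2$; without this (or an equivalent device) your proposal is missing its key estimate.
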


\begin{proof} 
We first reveal the random functions $\phi_1,\ldots,\phi_r$, and therefore the sets $Q_i(F)$ (and hence also the random variable $X_F$) for each $F \in \cF$. To prove the lemma we will only need to use the randomness in the choice of $H_1,\ldots,H_r$. More precisely, we will consider the set 
$$Y = \big\{ (uv,i) \in E(F) \times [r] : u,v \not\in Q_i(F) \big\}$$
of pairs $(e,i) \in E(F) \times [r]$ such that neither endpoint of $e$ is contained in $Q_i(F)$, and
$$Z = \sum_{(e,i) \in Y} \1\big[ e \not\in E(G_i) \big],$$
the number of such pairs for which $i \not\in \chi'(e)$. 
Note that $|Y| \le rt$, and that
$$Z \sim \textup{Bin}(|Y|,1-p),$$ 
since the events $\{e \in E(G_i) \}$ for $(e,i) \in Y$ are independent, and correspond to the appearance of certain edges in the graphs $H_1,\ldots,H_r$. Indeed, for each $i \in [r]$, the graph $\{ e : (e,i) \in Y \}$ is contained in a clique with at most one vertex in each part of $\phi_i$. 

Thus, given $|Y|$ (which is determined by $\phi_1,\ldots,\phi_r$) the random variable $Z$ is a binomial random variable with expectation
$$\Ex\big[ Z \,\big|\, |Y| \big] \, \le \, (1-p)rt \, = \, 5\delta\eps rt.$$
Now, note that if $F \subset B$, then for each edge $e \in E(F)$, there are at least $\eps r$ colours $i \in [r]$ such that $e \not\in E(G_i)$. Thus
$$\sum_{i=1}^r \sum_{e \in E(F)} \1\big[ e \not\in E(G_i) \big] \ge \eps r t,$$
and moreover, if $X_F \le \eps r t / 2$, then 
$$Z \ge \frac{\eps rt}{2},$$
since for each vertex $v \in Q_i(F)$ we remove at most $d_F(v)$ edges from $Y$. By Chernoff's inequality, it follows that for a fixed $F \in \cF$ we have 
$$\Pr\Big( X_F \le \eps r t / 2 \,\text{ and }\, F \subset B \Big) \le e^{-\delta\eps rt}.$$
Therefore, taking a union bound over $F \in \cF$, and recalling that $t = \delta\eps k^2$, it follows that the probability that there exists $F \in \mathcal{F}$ with $X_F \le \eps r t / 2$ and $F \subset B$ is at most
$$\binom{n}{k}\binom{\binom{k}{2}}{t} e^{-\delta \eps r t} \leq \left(\frac{en}{k}\left(\frac{e}{\delta \eps}\right)^{\delta \eps k} e^{-\delta^2 \eps^2 r k}\right)^k \to 0,$$
as claimed, where in the final step we used our choice of $n = 2^{\delta^4 \eps^2 r k}$, the bound 
$$\eps = \frac{r-s}{r} \ge \frac{C\log r}{r},$$
which holds by our assumption that $s \le r - C\log r$, and our choice of $C = 1/\delta^3$. 
\end{proof}

Finally, we will use the randomness in $\phi_1,\ldots,\phi_r$ to show that $X_F$ is always small. 

\begin{lemma}\label{lem:XF:is:small}
With high probability, 
$$X_F \le \frac{\eps r t}{2}$$
for every $F \in \mathcal{F}$. 
\end{lemma}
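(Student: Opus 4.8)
The plan is to show that the event $X_F > \eps r t / 2$ for some $F \in \cF$ is unlikely, by identifying a suitable `bottleneck event' for each bad $F$ and taking a union bound over these events (rather than over all of $\cF$, which would be too wasteful because of the many low-degree vertices). Fix $F \in \cF$ with $X_F > \eps r t/2$, and order its vertices as $v_1 \prec_F \cdots \prec_F v_k$ by decreasing degree. For $1 \le \ell \le k$ write $A_\ell = \{v_1,\ldots,v_\ell\}$ for the initial segments of this order. The key structural observation is that, because we weight by $d_F(v)$ in the definition of $X_F$ and high-degree vertices come first, a large value of $X_F$ forces one of the initial segments $A = A_\ell$ to contain many `collision pairs': pairs $(i,v)$ with $i \in [r]$ and $v \in Q_i(F) \cap A$. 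Concretely, I would prove that if $X_F > \eps r t/2$ then there exists $\ell$ such that
$$\big| \{ (i,v) : i \in [r],\ v \in Q_i(F) \cap A_\ell \} \big| \ge \delta \eps r \ell.$$
This is an averaging/rearrangement step: since $\sum_v d_F(v) = 2t$, writing $X_F = \sum_i \sum_{v \in Q_i(F)} d_F(v)$ and summing $d_F(v_j)$ in order of $j$, the `prefix sums' of the collision counts must already be large at some point, because the degrees $d_F(v_j)$ are non-increasing (so an Abel summation / the fact that $\sum_{v \in Q_i(F)} d_F(v) \le d_F(v_1) \cdot |Q_i(F) \cap A_\ell| + \ldots$ type bound gives the claim). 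This is the step I expect to be the main obstacle — getting the constant $\delta$ right and correctly converting the degree-weighted sum $X_F$ into an unweighted count on a prefix.

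Given this, for each choice of $\ell \in [k]$ and each set $A \subset [n]$ with $|A| = \ell$, let $E(A,\ell)$ be the event that there are at least $\delta \eps r \ell$ pairs $(i,v)$ with $v \in A$ and $v$ sharing a $\phi_i$-part with some $\phi_i$-earlier vertex of $A$ (here `earlier' refers to any fixed ordering of $A$; note $Q_i(F) \cap A$ only depends on $F$ through the vertex set and order restricted to $A$, and we may simply fix the order on $A$ in advance). I would reveal $\phi_1(v),\ldots,\phi_r(v)$ for $v \in A$ one vertex at a time, in order: for each colour $i$ and each non-first vertex $v$, the probability that $\phi_i(v)$ collides with one of the (at most $k$) previously-revealed values of $\phi_i$ on $A$ is at most $k/m$, independently across $i$ and across the sequential revelations. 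Hence the number of collision pairs is stochastically dominated by a binomial with at most $r\ell$ trials and success probability $k/m$, and since its mean $r \ell k / m$ is much smaller than $\delta \eps r \ell$ (as $m = 2^{\delta^2 \eps k}$ is super-polynomial in $k$), Chernoff gives
$$\Pr\big( E(A,\ell) \big) \le \bigg( \frac{e k}{\delta \eps m} \bigg)^{\delta \eps r \ell} \le \bigg( \frac{1}{m} \bigg)^{\delta^2 \eps r \ell}$$
for $\delta$ small, say.

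Finally I would take a union bound: the probability that some $F \in \cF$ has $X_F > \eps r t/2$ is at most $\sum_{\ell=1}^k \binom{n}{\ell} \Pr(E(A,\ell))$, since the bottleneck event depends only on the vertex set $A = A_\ell$ (a subset of $[n]$ of size $\ell$) and not on the rest of $F$. Using $\binom{n}{\ell} \le n^\ell = 2^{\delta^4 \eps^2 r k \ell}$ and the bound above with $m = 2^{\delta^2 \eps k}$, each term is at most $\big( 2^{\delta^4 \eps^2 r k} \cdot 2^{-\delta^4 \eps^2 r k}\big)^{\ell}$-type — more carefully, $2^{\delta^4 \eps^2 rk\ell} \cdot 2^{-\delta^2 \eps r \ell \cdot \delta^2 \eps k}$, and since $\ell \ge 1$ and these exponents cancel up to a constant factor that is on the losing side for the exponential (one needs an extra $\delta$ of room, which comes from $\delta^4$ vs $\delta^2 \cdot \delta^2$ being the same — so in fact one should keep $X_F$'s threshold slightly below $\delta \eps r \ell$, or note $k$ is large), the sum over $\ell \in [k]$ tends to $0$. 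Combining Lemma~\ref{lem:small:XF} and this lemma immediately yields Lemma~\ref{lemma:many-bad}, and then Lemmas~\ref{lemma:few-bad} and~\ref{lemma:many-bad} together with Lemma~\ref{lem:chi:always:big} complete the proof of Theorem~\ref{thm:setRamsey}.
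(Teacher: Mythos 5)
Your proposal is correct and follows the same skeleton as the paper's proof: a bottleneck event asserting many collision pairs $(i,v)$ inside an initial segment of the degree order, a sequential revelation of the $\phi_i$'s giving probability at most $k/m$ per collision, and a union bound over vertex subsets $A\subset[n]$ of each size $\ell\le k$. The one genuine difference is in the structural claim: the paper partitions $V(F)$ into dyadic degree classes $A_j(F)=\{v: 2^{-j}k\le d_F(v)<2^{-j+1}k\}$ and finds $\ell$ with $s_\ell(F)>\delta\eps r\sum_{j\le\ell}|A_j(F)|$, whereas you work directly with all prefixes of the degree order. The step you flagged as the main obstacle in fact goes through cleanly by exactly the Abel summation you sketch: writing $c_j$ for the number of colours $i$ with $v_j\in Q_i(F)$, $S_\ell=\sum_{j\le\ell}c_j$ and $d_j=d_F(v_j)$ (non-increasing, $d_{k+1}:=0$), if $S_\ell<\delta\eps r\ell$ for every $\ell$ then
$$X_F=\sum_{j=1}^k c_j d_j=\sum_{\ell=1}^k S_\ell\,(d_\ell-d_{\ell+1})<\delta\eps r\sum_{\ell=1}^k \ell\,(d_\ell-d_{\ell+1})=\delta\eps r\sum_{\ell=1}^k d_\ell=2\delta\eps rt\le\frac{\eps rt}{2}$$
for $\delta\le 1/4$, so your claim holds (with a better constant than the paper's $\delta<2^{-4}$). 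Your observation that the collision count on a prefix is order-independent (in each $\phi_i$-class of size $c$ exactly $c-1$ vertices are counted) is the right justification for union-bounding over unordered sets $A$. Finally, the apparent tightness in your last computation is an artifact of the lossy intermediate bound $ek/(\delta\eps m)\le m^{-\delta}$: using instead $\eps m/k\ge\sqrt m=2^{\delta^2\eps k/2}$, as the paper does, each term of the union bound is at most $\big(n\,(e/\delta)^{\delta\eps r}2^{-\delta^3\eps^2 rk/2}\big)^{\ell}$, and since $n=2^{\delta^4\eps^2 rk}$ and $\delta^2\eps k\ge\log r/\delta$, the exponent $\delta^3/2$ beats $\delta^4$ with room to spare, so the sum over $\ell$ tends to $0$ without any need to lower the threshold $\delta\eps r\ell$.
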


\begin{proof}
For each graph $F \in \cF$, and each $j \in \big\{ 1,\ldots,\lceil \log_2 k \rceil \big\}$, define a set
$$A_j(F) = \Big\{ v \in V(F) : 2^{-j} k \le d_F(v) < 2^{-j+1} k \Big\}$$
and a random variable 
$$s_j(F) = \sum_{i=1}^r |A_j(F) \cap Q_i(F)|.$$
Note that the random functions $\phi_1,\ldots,\phi_r$ determine $Q_1(F),\ldots,Q_r(F)$, and hence $s_j(F)$. The key step is the following claim, which provides us with our bottleneck event.  

\begin{claim}
If $X_F \ge \eps r t/2$, then there exists $\ell \in \big\{ 1,\ldots,\lceil \log_2 k \rceil \big\}$ such that
\begin{equation}\label{eq:initial:segment}
s_\ell(F) > \, \delta \eps r \, \sum_{j = 1}^\ell |A_j(F)|.
\end{equation}
\end{claim}

\begin{proof}[Proof of claim]
Observe that
$$X_F = \sum_{i = 1}^r \sum_{v \in Q_i(F)} d_F(v) \le \sum_{i = 1}^r \sum_{j = 1}^{\lceil \log_2 k \rceil} \frac{k}{2^{j-1}} \cdot |A_j(F) \cap Q_i(F)| = \sum_{j = 1}^{\lceil \log_2 k \rceil} \frac{k}{2^{j-1}} \cdot s_j(F),$$ 
and therefore if $X_F \geq \eps r t/2$, then
\begin{equation}\label{eq:apply-hyp-HF}
\sum_{j = 1}^{\lceil \log_2 k \rceil} \frac{s_j(F)}{2^j} \ge \frac{\eps rt}{4k}.
\end{equation}


\noindent Note also that
$$\sum_{j = 1}^{\lceil \log_2 k \rceil} \frac{|A_j(F)|}{2^j} \, \le \, \frac{1}{k} \sum_{v \in V(F)} d_F(v) \, = \, \frac{2t}{k}.$$
Thus, if~\eqref{eq:initial:segment} fails to hold for every $\ell \in \big\{ 1,\ldots,\lceil \log_2 k \rceil \big\}$ then, by~\eqref{eq:apply-hyp-HF}, we have 
\begin{align*}
\frac{t}{4\delta k} & \, \le \, \frac{1}{\delta \eps r} \sum_{\ell = 1}^{\lceil \log_2 k \rceil} \frac{s_\ell(F)}{2^\ell} \, \le \, \sum_{\ell = 1}^{\lceil \log_2 k \rceil} \frac{1}{2^\ell}  \sum_{j = 1}^\ell |A_j(F)| \\
& \hspace{2cm} \, = \, \sum_{j = 1}^{\lceil \log_2 k \rceil} |A_j(F)| \sum_{\ell = j}^{\lceil \log_2 k \rceil} \frac{1}{2^\ell} \, \le \, \sum_{j = 1}^{\lceil \log_2 k \rceil} \frac{|A_j(F)|}{2^{j-1}} \, \le \, \frac{4t}{k}.
\end{align*}
Since $\delta < 2^{-4}$, this is a contradiction, and so the claim follows.
\end{proof}

Fix $\ell \in \big\{ 1,\ldots,\lceil \log_2 k \rceil \big\}$ such that ~\eqref{eq:initial:segment} holds, and set
$$A := \bigcup _{j = 1}^\ell A_j(F) \qquad \text{and} \qquad a := |A|.$$
Now, if we reveal $\phi_i$ for the vertices of $F$ one vertex at a time using the order $\prec_F$, then for each vertex $v \in Q_i(F)$ we must choose $\phi_i(v)$ to be one of the (at most $k$) previously selected elements of $[m]$. The expected number of sets $A$ such that~\eqref{eq:initial:segment} holds is thus at most
$$\sum_{a = 1}^k n^a {ar \choose \delta\eps ar} \bigg( \frac{k}{m} \bigg)^{\delta\eps ar} \le \, \sum_{a = 1}^k \bigg( n \cdot \bigg( \frac{e}{\delta\eps} \cdot \frac{k}{m} \bigg)^{\delta\eps r} \bigg)^a \to 0$$
as $k \to \infty$, as required, since $n = 2^{\delta^4 \eps^2 r k}$ and $\eps m / k \ge \sqrt{m} = 2^{\delta^2\eps k/2}$. 
\end{proof}

Combining Lemmas~\ref{lem:small:XF} and~\ref{lem:XF:is:small}, we obtain Lemma~\ref{lemma:many-bad}.

\begin{proof}[Proof of Lemma~\ref{lemma:many-bad}]
By Lemma~\ref{lem:XF:is:small}, with high probability we have $X_F \le \eps r t / 2$ for every $F \in \cF$. By Lemma~\ref{lem:small:XF}, it follows that with high probability $F \not\subset B$ for every $F \in \cF$. Therefore, with high probability every $k$-clique contains at most $t$ bad edges, as claimed.
\end{proof}

We can now easily put together the pieces and prove our main theorem. 

\begin{proof}[Proof of Theorem~\ref{thm:setRamsey}]
With high probability, the random colouring $\chi$ satisfies:
\begin{itemize}
\item $|\chi(e)| \ge s$ for every $e \in E(K_n)$, by Lemma~\ref{lem:chi:always:big};\smallskip
\item $\chi$ contains no monochromatic $K_k$ with at most $t$ bad edges, by Lemma~\ref{lemma:few-bad};\smallskip
\item every $k$-clique contains at most $t$ bad edges, by Lemma~\ref{lemma:many-bad}.
\end{itemize}
Thus $R_{r,s}(k) > n = 2^{\delta^4 \eps^2 rk}$, as required.
\end{proof}

\section{A simpler construction for small $k$}\label{simple:sec}

To conclude, we will prove the alternative lower bounds~\eqref{eq:simple:bound} and~\eqref{eq:simple:bound2}, promised in the introduction, which hold for smaller values of $k$. Together with Theorem~\ref{thm:setRamsey} and the results of~\cite{CFHMSV}, these bounds will allow us to prove Corollary~\ref{cor:general}.

\begin{theorem}\label{thm:simple:bound}
Let $r > s \ge 1$, and set $\eps = (r-s)/r$. We have
$$R_{r,s}(k) \ge \bigg( \frac{\eps (k-1)}{e} \bigg)^{\eps r/2}$$
for every $k \in \N$. 
\end{theorem}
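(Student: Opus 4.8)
The plan is to use a simple product colouring, which is the natural ``$s$ copies'' analogue of the random colouring underlying the classical lower bound $R_r(k) \ge 2^{\Omega(k)}$ for the two-colour Ramsey number. Set $\eps r = r - s$, so we have $\eps r$ ``spare'' colours. I would partition $[r]$ into $\eps r$ blocks of (roughly) equal size $1/\eps$, and for each block independently define a random $2$-colouring of $E(K_n)$: one of the two classes is declared ``present'' for all colours in that block, the other ``absent''. Concretely, for a uniformly random partition $E(K_n) = E_1 \sqcup E_2$ of the edges (each edge flipped by an independent fair coin), colour $i$ in block $b$ gets assigned to edge $e$ iff $e$ lies in the chosen class for block $b$. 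Then each edge receives exactly one of the two colour-subsets from each block, so $|\chi(e)|$ is a sum over blocks, and with the blocks sized so that each contributes at least $\lceil 1/\eps \rceil - 1 \ge \dots$ colours one checks $|\chi(e)| \ge s$ deterministically (no randomness needed for this part — the point is just that $(\text{number of blocks})\times(\text{min block size}) \ge s$; choosing block sizes to be $\lceil r/(\eps r)\rceil$ and $\lfloor r/(\eps r)\rfloor$ appropriately makes every edge get $\ge s$ colours on the nose). Actually the cleanest version: take $\eps r$ blocks each of size exactly $\lceil 1/\eps \rceil$ would overshoot $r$, so instead one takes the bound in the form stated, which is why the exponent is $\eps r/2$ rather than $\eps r$ — I'd allow a factor-$2$ slack in the block count.

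\textbf{Main steps.} First, fix the deterministic structure: choose $\eps r / 2$ blocks (say) so that $2/\eps$ colours sit in each block, guaranteeing every edge gets at least $r - \eps r / 2 \cdot (1/\eps) \cdot (\text{something}) = s$ colours — I'd nail down the arithmetic so that an edge is missing at most one colour-class worth from each of $\eps r/2$ blocks, hence missing at most $\eps r/2 \cdot (1/\eps) $... the key inequality to verify is that the number of blocks times the per-block loss is at most $r - s = \eps r$. Second, bound the probability that a fixed $k$-set $S$ is monochromatic in colour $i$: colour $i$ lives in some block $b$, and $S$ is monochromatic in colour $i$ iff all $\binom{k}{2}$ edges of $S$ fell on the ``present'' side of block $b$'s coin flips, which has probability $2^{-\binom{k}{2}}$; but crucially all colours in block $b$ share the same coin flips, so I should union-bound over blocks, not over all $r$ colours — giving probability at most (number of blocks)$\cdot 2^{-\binom{k}{2}}$ per $k$-set. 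Third, union bound over the $\binom{n}{k}$ choices of $S$: monochromatic $k$-clique exists with probability at most $\binom{n}{k} \cdot (\eps r/2) \cdot 2^{-\binom{k}{2}}$, and I want this $< 1$. With $n = \big(\eps(k-1)/e\big)^{\eps r/2}$ one has $\binom{n}{k} \le (en/k)^k \le n^k$ roughly, and the arithmetic $n^k < 2^{\binom{k}{2}} / (\eps r/2)$ should follow from $\log_2 n < (k-1)/2 - o(\cdots)$, i.e. $(\eps r / 2)\log_2(\eps(k-1)/e) \le \binom{k}{2}/k \cdot k = \dots$; I'd check this reduces to $\log_2 n \lesssim (k-1)/2$, matching the chosen value of $n$ up to the constant absorbed into the ``$/e$''.

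\textbf{The main obstacle.} The only genuinely delicate point is getting the constants in the exponent and inside the logarithm to work out simultaneously with the ``$|\chi(e)| \ge s$ everywhere'' requirement — that is, reconciling how coarsely one must partition $[r]$ into blocks (which forces the $\eps r/2$ rather than $\eps r$, since $1/\eps$ need not be an integer and one wants whole blocks summing to at least $s$) against how much room that leaves in the union bound. Everything else (Chernoff is not even needed; plain independence across blocks plus a first-moment bound suffices) is routine. I would organise the write-up as: (1) define the block partition and the colouring; (2) a one-line deterministic check that $|\chi(e)| \ge s$; (3) compute $\Pr(S \text{ mono in colour } i)$ and union over the $O(\eps r)$ blocks; (4) union over $\binom nk$ sets and plug in $n$. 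For the strengthening \eqref{eq:simple:bound2} to $2^{\Omega(\eps r)}$ when $k \ge (1+\delta)/\eps + 1$, the same construction works but one keeps $n$ as large as $2^{c\eps r}$ for a suitable small $c = c(\delta)$: then $\binom nk 2^{-\binom k2}$ is still $o(1)$ because $\binom k2 \gg k \log n$ precisely when $k - 1 \gg (1/\eps)\cdot(\text{const})$, i.e. when $\eps(k-1)$ exceeds a constant bigger than $1$ — which is exactly the hypothesis $k \ge (1+\delta)/\eps+1$. I'd note this refinement in a remark rather than proving it in full.
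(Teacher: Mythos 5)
The step that breaks is your ``one-line deterministic check that $|\chi(e)| \ge s$''. As you have set it up, each block of roughly $1/\eps$ colours is either wholly assigned to $e$ or not at all, according to a fair coin, so $|\chi(e)|$ has mean about $r/2$, and for $\eps < 1/2$ it is hopeless to have every edge reach $s = (1-\eps)r$. (The alternative reading, where the coin decides which half of a block the edge gets, still yields only about $r/2$ colours per edge.) More structurally: $|\chi(e)| \ge s$ for every $e$ forces the average colour-class density to be at least $s/r = 1-\eps$, whereas your fair coins make every colour class a $G(n,1/2)$ of density $1/2$. Your intuition that the exponent $\eps r/2$ records ``block-count slack'' is also off: in the actual proof the $1/2$ comes simply from the first-moment union bound over the $\binom{n}{2} \le n^2$ edges, i.e.\ $n \le \Pr\big(|\chi(e)|<s\big)^{-1/2}$.

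The paper's construction differs in exactly the way yours needs to. Each colour $i \in [r]$ gets its \emph{own independent} uniformly random $\phi_i : [n] \to [k-1]$, and one declares $i \in \chi(e)$ for $e = uv$ iff $\phi_i(u) \ne \phi_i(v)$. Every colour class is then a complete $(k-1)$-partite graph, so it contains no $K_k$ \emph{deterministically}; no randomness is spent on the no-monochromatic-clique property. Randomness is used only to ensure $|\chi(e)| \ge s$, and here per-colour independence is the crux: $\Pr(i \notin \chi(e)) = 1/(k-1)$ independently over $i$, so
$$\Pr\big(|\chi(e)| < s\big) \le \binom{r}{\eps r}\bigg(\frac{1}{k-1}\bigg)^{\eps r} \le \bigg(\frac{e}{\eps(k-1)}\bigg)^{\eps r},$$
and the first-moment bound over the $\binom{n}{2} < n^2$ edges gives $n < \big(\eps(k-1)/e\big)^{\eps r/2}$ as required. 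Even the most natural repair of your block idea --- share one random $(k-1)$-partition within each block, so colour classes become dense $(k-1)$-partite graphs and the $K_k$-freeness is restored --- still falls short: the events $i \notin \chi(e)$ are then perfectly correlated within a block, so $|\chi(e)| < s$ requires more than $\eps^2 r$ bad blocks rather than more than $\eps r$ bad colours, and the resulting bound is only $\big(\eps(k-1)/e\big)^{\eps^2 r/2}$, an $\eps$-factor weaker in the exponent.
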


Note that Theorem~\ref{thm:simple:bound} only gives a non-trivial bound when $\eps (k-1) > e$. However, the same construction (together with a slightly more careful calculation) in fact gives an exponential lower bound almost all the way down to the Turán range $k \le 1/\eps$. 

\begin{theorem}\label{thm:simple:bound2}
Let $r > s \ge 1$ and $\delta > 0$, and set $\eps = (r-s)/r$. We have
$$R_{r,s}(k) \ge \exp\big( c(\delta) \eps r \big)$$
for some $c(\delta) > 0$ and every $k \ge (1 + \delta)/\eps + 1$.
\end{theorem}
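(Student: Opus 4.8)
The plan is to use the same product-type construction that underlies Theorem~\ref{thm:simple:bound}, and to squeeze a bit more out of the counting argument. Recall that the simple construction colours the edges of $K_n$ as follows: partition the colour set $[r]$ into $\eps r$ blocks of size roughly $1/\eps$, and for each block independently assign a uniformly random colour of that block to be \emph{absent}, where the choice is made independently (or via a product structure) for each vertex or small set of vertices; an edge then receives all colours not forbidden by either endpoint. The point is that a monochromatic $K_k$ in colour $i$ forces all $k$ vertices to agree (in the appropriate sense) on not forbidding colour $i$ in the block containing $i$, an event of probability at most $(1-\eps)^{k-1}$ per block after conditioning, and a union bound over colours and over $\binom{n}{k}$ vertex sets must be made to vanish. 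Setting $n$ as large as possible subject to this gives~\eqref{eq:simple:bound}. So first I would recall this construction precisely from the proof of Theorem~\ref{thm:simple:bound} and the exact expression bounding the expected number of monochromatic $k$-cliques.

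The key observation for Theorem~\ref{thm:simple:bound2} is that when $k$ is small, the bound $\binom{n}{k} \le n^k$ used in the proof of Theorem~\ref{thm:simple:bound} is wasteful relative to the per-clique failure probability; we want to extract an exponential-in-$\eps r$ lower bound for $n$ even though $k$ is only a small multiple of $1/\eps$. Concretely, the expected number of monochromatic $k$-cliques is at most something of the form $r\binom{n}{k}\big(1-\eps\big)^{(k-1)\eps r}$ (one factor $(1-\eps)^{k-1}$ per block, $\eps r$ blocks), and to make this tend to zero it suffices that $n^k \le r^{-1}(1-\eps)^{-(k-1)\eps r}$, i.e. $n \le \big( r^{-1/k}(1-\eps)^{-(k-1)\eps r/k} \big)$. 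Since $k \ge (1+\delta)/\eps + 1$, we have $(k-1)/k \ge \frac{\delta}{1+\delta} \cdot \frac{1}{\eps k} \cdot \eps k$... more simply $(k-1)\eps r/k \ge \big(1 - \tfrac{1}{k}\big)\eps r$ and $\tfrac1k \le \tfrac{\eps}{1+\delta}$, so $(k-1)\eps r/k \ge \big(1 - \tfrac{\eps}{1+\delta}\big)\eps r \ge \tfrac{\delta}{1+\delta}\,\eps r$ once $\eps \le 1$ — wait, I should be a little more careful, but the upshot is $(k-1)/k$ is bounded below by a constant depending only on $\delta$ (namely $(k-1)/k \ge 1 - \eps/(1+\delta) \ge 1 - 1/(1+\delta) = \delta/(1+\delta)$ is too lossy; better: $k-1 \ge (1+\delta)/\eps$ so $(k-1)/k = 1 - 1/k$ and $1/k \le \eps/(1+\delta+\eps) \le 1/(2+\delta)$, giving $(k-1)/k \ge (1+\delta)/(2+\delta)$). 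Thus $\log n \ge c_1(\delta)\,\eps r\,\log\frac{1}{1-\eps} - \frac{\log r}{k}$, and since $\log\frac{1}{1-\eps} \ge \eps$ this is at least $c_1(\delta)\eps^2 r - \log r$; to beat the $\log r$ term and get the clean statement $\exp(c(\delta)\eps r)$ I would instead keep one full factor of $\log\frac{1}{1-\eps}$ and note it is at least a constant when $\eps$ is bounded below, and handle small $\eps$ by absorbing: actually the cleanest route is that $\eps r = r-s \ge 1$, so $\eps \ge 1/r$, hence $\eps r \cdot \log\frac{1}{1-\eps} \ge \eps^2 r \ge \eps r/r = \eps$... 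I would reorganise so that the final bound reads $R_{r,s}(k) \ge \exp(c(\delta)\eps r)$, choosing $n = \lfloor \exp(c(\delta)\eps r)\rfloor$ and verifying $r\binom{n}{k}(1-\eps)^{(k-1)\eps r} < 1$ directly.

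So the steps, in order, are: (1) recall the construction and the exact first-moment bound from the proof of Theorem~\ref{thm:simple:bound}; (2) observe that $k \ge (1+\delta)/\eps + 1$ forces $(k-1)/k \ge c_0(\delta)$ for an explicit constant $c_0(\delta) \in (0,1)$, and also that this lower bound on $k$ is exactly what makes $(k-1)\eps$ exceed $1$ by a constant factor, which is what we need for the geometric factor to dominate; (3) set $n = \lfloor \exp(c(\delta)\eps r) \rfloor$ for a suitably small $c(\delta)$ and verify the expected number of monochromatic $k$-cliques is $o(1)$, using $\binom{n}{k}\le (en/k)^k \le n^k$, $(1-\eps)^{-1} \ge e^{\eps}$ is the wrong direction — I mean $(1-\eps) \le e^{-\eps}$ so $(1-\eps)^{(k-1)\eps r} \le e^{-(k-1)\eps^2 r}$, balanced against $n^k = e^{c(\delta)\eps r k}$; choosing $c(\delta) < c_0(\delta)\eps$ is not allowed since $c(\delta)$ must not depend on $\eps$, so here I must genuinely use the $\log\frac{1}{1-\eps}$ rather than just $\eps$: indeed $(1-\eps)^{(k-1)\eps r} = \exp\big(-(k-1)\eps r \log\frac{1}{1-\eps}\big)$ and I need $c(\delta)\eps r k < (k-1)\eps r\log\frac{1}{1-\eps}$, i.e. $c(\delta) < \frac{k-1}{k}\log\frac{1}{1-\eps}$; now $\frac{k-1}{k} \ge c_0(\delta)$ and $\log\frac{1}{1-\eps} \ge \eps \ge 1/r$ — still $\eps$-dependent. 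The fix: absorb the $\eps$ by proving instead $R_{r,s}(k) \ge \exp(c(\delta)\eps r \log\frac{1}{1-\eps})$ and noting $\log\frac{1}{1-\eps}\ge \eps$ gives $\ge \exp(c(\delta)\eps^2 r)$, OR — and this is what I would actually do — note that the theorem only claims $\exp(c(\delta)\eps r)$, and use that $\eps r$ appears with a genuine $\log\frac1{1-\eps} \ge \log\frac{r}{r-1}\cdot$... no. I think the honest reading is that Theorem~\ref{thm:simple:bound2} wants $\exp(\Omega_\delta(\eps r))$ and this follows because $(k-1)\eps \ge 1+\delta$ makes the geometric decay per block beat the $1/\eps$ choices per block with a constant margin: per block the failure probability is $\le (1-\eps)^{k-1} \le e^{-(k-1)\eps} \le e^{-(1+\delta)}$, and there are $\le r$ choices of colour-within-block... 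I would set it up as $\sum_{\text{cliques}} \le r\binom{n}{k} e^{-(1+\delta)\eps r}$ — no wait the exponent should be $(k-1)\eps r \ge (1+\delta)r$, giving $r\binom nk e^{-(1+\delta)r}$, far too strong and not matching; I have miscounted the number of blocks. Let me just say: step (3) is the bookkeeping of exactly which quantity ($\eps$, $\eps^2 r$, or $\eps r$) ends up in the exponent, and I would track constants carefully to land on $\exp(c(\delta)\eps r)$, using the refined bound $\log\frac{1}{1-\eps} \ge \eps$ together with $k = \Theta_\delta(1/\eps)$ so that $\eps r k = \Theta_\delta(r)$ and the product $(k-1)\eps r\log\frac1{1-\eps} - k\log n$ is positive for $\log n$ up to a constant-times-$\eps r$.

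The main obstacle is precisely this constant-tracking in step~(3): making sure that the hypothesis $k \ge (1+\delta)/\eps + 1$ (as opposed to the weaker $\eps(k-1) > e$ of Theorem~\ref{thm:simple:bound}) is used in exactly the right place so that the exponent in the final bound is $\eps r$ up to a $\delta$-dependent constant, and not $\eps^2 r$ or $\eps^2 rk$. Everything else — the construction, its validity as a set-colouring (every edge gets $\ge s$ colours because each block forbids at most $2$ colours but actually with the right setup at most one, so each edge keeps $\ge r - \eps r \cdot(\text{something}) \ge s$ colours), and the first-moment union bound — is routine and essentially identical to the proof of Theorem~\ref{thm:simple:bound}.
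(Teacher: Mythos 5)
Your proposal misremembers both the construction and where the randomness is needed, and as a result the key step of the paper's argument is missing. In the construction behind Theorems~\ref{thm:simple:bound} and~\ref{thm:simple:bound2}, each colour class $G_i$ is a random blow-up of $K_{k-1}$: one takes independent uniform functions $\phi_i \colon [n] \to [k-1]$ and puts $i \in \chi(uv)$ if and only if $\phi_i(u) \ne \phi_i(v)$. Every colour class is $(k-1)$-partite, so a monochromatic $K_k$ is impossible \emph{deterministically}; there is no union bound over cliques at all, and no quantity of the form $r\binom{n}{k}(1-\eps)^{(k-1)\eps r}$ appears anywhere. The only probabilistic event to control is that some edge loses more than $\eps r$ colours, i.e.\ that $\mathrm{Bin}\big(r,1/(k-1)\big) \ge \eps r$, and the first moment is taken over the $\binom{n}{2}$ edges, not over $k$-cliques. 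Your ``blocks of size $1/\eps$'' colouring is a different object (closer in spirit to the code-type colouring of~\cite{CFHMSV}), and you never verify that it assigns at least $s$ colours to every edge; your own sketch (``each block forbids at most $2$ colours'') would only guarantee $r - 2\eps r < s$ colours, so validity is not routine in your setup.

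More importantly, the entire content of Theorem~\ref{thm:simple:bound2} relative to Theorem~\ref{thm:simple:bound} is the replacement of the crude union bound $\binom{r}{\eps r}(k-1)^{-\eps r}$ on $\Pr(|\chi(e)| < s)$ by Chernoff's inequality for the binomial number of missing colours: since $\eps(k-1) \ge 1+\delta$, the mean $r/(k-1)$ is at most $\eps r/(1+\delta)$, so the probability that a fixed edge is missing at least $\eps r$ colours is at most $e^{-c(\delta)\eps r}$ with $c(\delta) = \Omega(\delta^2)$, and taking $n < e^{c(\delta)\eps r/2}$ makes the expected number of bad edges less than $1$. Your proposal never invokes any concentration inequality; instead you try to rescue the exponent by tuning constants in a clique-count that is not part of the argument, and you yourself observe that your route either yields an exponent of order $\eps^2 r$ or forces $c(\delta)$ to depend on $\eps$, without resolving the discrepancy --- indeed you concede having miscounted and leave step~(3) as unfinished bookkeeping. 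That unresolved step is exactly the theorem, and the Chernoff estimate (applied per edge, in the blow-up-of-$K_{k-1}$ construction) is the missing idea that delivers the clean $\exp(c(\delta)\eps r)$ bound.
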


The construction is a simpler version of the one we used to prove Theorem~\ref{thm:setRamsey}: instead of taking blow-ups of a random graph for our colours, we take blow-ups of the complete graph with $k-1$ vertices (that is, we take complete $(k-1)$-partite graphs). In particular, this means that we do not need to worry about crossing edges. 

To define the colouring precisely, let $\phi_1,\ldots,\phi_r$ be independent uniformly chosen functions from $[n]$ to $[k-1]$, and for each $i \in [r]$ define $G_i$ to be the (random) graph with vertex set $[n]$ and edge set
$$E(G_i) = \big\{ uv : \phi_i(u) \ne \phi_i(v) \big\},$$
that is, a random complete $(k-1)$-partite graph, with parts given by $\phi_i$. Define a colouring $\chi$ of the edges of $K_n$ by
$$\chi(e)= \big\{ i \in [r] : e \in E(G_i) \big\}.$$  
Since each colour class is $(k-1)$-partite, $\chi$ contains no monochromatic copy of $K_k$. Our task is to show that, with positive probability, every edge receives at least $s$ colours. 

\begin{proof}[Proof of Theorem~\ref{thm:simple:bound}]
Let $e = uv \in E(K_n)$. Since $i \not\in \chi(e)$ only if $\phi_i(u) = \phi_i(v)$, which occurs with probability $1/(k-1)$, independently for each colour, it follows that
\begin{equation}\label{eq:simple:union}
\Pr\big( |\chi(e)| < s \big) \le {r \choose \eps r} \bigg( \frac{1}{k-1} \bigg)^{\eps r} \le \bigg( \frac{e}{\eps(k-1)} \bigg)^{\eps r}.
\end{equation}
Thus, if $n < \big( \eps (k-1)/e \big)^{\eps r/2}$, then the expected number of edges $e \in E(K_n)$ such that $|\chi(e)| < s$ is less than $1$, and hence there exists a choice of the functions $\phi_1,\ldots,\phi_r$ such that every edge receives at least $s$ colours, as required.
\end{proof}

To prove an exponential bound when $\eps(k-1) < e$, we simply replace the union bound in~\eqref{eq:simple:union} by an application of Chernoff's inequality. 

\begin{proof}[Proof of Theorem~\ref{thm:simple:bound2}]
We may assume that $\delta \le 2$, since otherwise the claimed bound (with $c(\delta)$ an absolute constant) follows from Theorem~\ref{thm:simple:bound}. Let $e \in E(K_n)$, and recall that the event $i \not\in \chi(e)$ occurs with probability $1/(k-1)$, independently for each colour $i \in [r]$. Since $\eps (k-1) \ge 1 + \delta$, it follows by Chernoff's inequality that
$$\Pr\big( |\chi(e)| < s \big) = \Pr\Big( \textup{Bin}\big( r,1/(k-1) \big) \ge \eps r \Big) \le e^{-c(\delta) \eps r}$$
for some constant $c(\delta) = \Omega(\delta^2)$. Thus, if $n < e^{c(\delta) \eps r / 2}$, then the expected number of edges $e \in E(K_n)$ such that $|\chi(e)| < s$ is less than $1$, and hence there exists a choice of the functions $\phi_1,\ldots,\phi_r$ such that every edge receives at least $s$ colours, as required.
\end{proof}

We can now easily deduce Corollary~\ref{cor:general}.

\begin{proof}[Proof of Corollary~\ref{cor:general}]
The upper bound holds by~\eqref{CFHMSV:upper}, which was proved in~\cite[Theorem~1.1]{CFHMSV}, and holds for all $r > s \ge 1$ and all $k \ge 3$. When $s \le r - C\log r$ and $k \ge (C / \eps) \log r$, the lower bound follows from Theorem~\ref{thm:setRamsey} (though note that in the case $r - s = \Omega(r)$ it was first proved in~\cite{CFHMSV}), and when $s \ge r - C\log r$ it follows from a simple random colouring (see~\cite[Section~2]{CFHMSV}) that 
$$R_{r,s}(k) \ge 2^{\eps k / 6}.$$
Finally, when $(1+\delta)/\eps + 1 \le k \le (C / \eps) \log r$, the lower bound follows from Theorem~\ref{thm:simple:bound2}. 
\end{proof}

\section{Concluding remarks}

We expect that determining $\log R_{r,s}(k)$ up to a constant factor for all $s = r - o(r)$ will be extremely difficult (though it does not seem unreasonable to hope that doing so in this range might prove to be easier than in the case $s = 1$). On the other hand, the upper and lower bounds in Corollary~\ref{cor:general} differ by a factor of roughly $(\log r)^2$ in the exponent when $s = r - \log r$, and also when $\eps k = \log r$. We expect that the lower bound can be improved by a factor of $\log r$ in these cases. 

\begin{conj}\label{conj:setRamsey}
There exists a constant 
$\delta > 0$ such that 
\begin{equation}\label{eq:setRamsey:conj}
R_{r,s}(k) \ge \exp\bigg( \frac{\delta k (r-s)^2}{r} \bigg)
\end{equation}
for every $r > s \ge 1$ and $k \ge 3 / \eps$, where $\eps = (r-s)/r$.
\end{conj}


It seems plausible that~\eqref{eq:setRamsey:conj} could be proved in the case $\eps k = O(\log k)$ using a modified version of the construction in Section~\ref{simple:sec} (taking Turán graphs with fewer parts, and assigning the set $[r]$ to edges with $|\chi(e)| < s$), since the density of bad edges will be small, and the correlation between them not too large. The obstruction when $s \ge r - \log r$ is potentially more serious, since in this case the density of bad edges will be larger than $\eps$, so one would not expect the conclusion of Lemma~\ref{lemma:many-bad} to hold. It therefore seems unlikely that the construction from Section~\ref{sec:construction} can be used to prove~\eqref{eq:setRamsey:conj} in this range. 

\section{Acknowledgements}
The research that led to this paper started at WoPOCA 2022. We thank the workshop organisers for a productive working environment. This study was also financed in part by the Coordenação de Aperfeiçoamento de Pessoal de Nível Superior, Brasil (CAPES). 

\end{document}